\numberwithin{equation}{section}
\newcommand{\REFEQN}[1] { \begin{equation}\label{#1} }
\newcommand{\ENDEQN}{\end{equation}}
\newcommand{\REFTHM}[1] { \begin{theorem}\label{#1} }
\newcommand{\ENDTHM}{\end{theorem}}
\newcommand{\REFPROP}[1]{\begin{proposition}\label{#1} }
\newcommand{\ENDPROP}{\end{proposition} }
\newcommand{\REFLEM}[1]{\begin{lemma}\label{#1} }
\newcommand{\ENDLEM}{\end{lemma} }
\newcommand{\REFCOR}[1]{\begin{corollary}\label{#1} }
\newcommand{\ENDCOR}{\end{corollary} }
\newcommand{\Ref}[1]{(\ref{#1})}
\def\R{{\mathbb R}}
\def\C{{\mathbb C}}
\def\D{{\mathbb D}}
\def\Z{{\mathbb Z}}
\def\N{{\mathbb N}}
\def\aa{{\bf a}}
\def\ww{{\bf w}}
\def\cc{{\bf c}}
\def\00{{\bf 0}}
\def\cbar{{\widehat{\C}}}
\def\AAA{{\cal A}}
\def\CCC{{\cal C}}
\def\WWW{{\cal W}}
\def\MMM{{\cal M}}
\def\VVV{{\cal V}}
\def\CCC{{\cal C}}
\def\sp{Schwarzian primitive\ }
\def\al{\alpha}
\def\g{\gamma}
\def\De{\Delta}
\def\de{\delta}
\def\smm{\smallsetminus}
\def\ds{\displaystyle}
\def\lv{\left(\begin{matrix}}
\def\rv{\end{matrix}\right)}
\begin{document}

\Year{2016} %
\Month{January}
\Vol{59} %
\No{1} %
\BeginPage{1} %
\EndPage{XX} %
\AuthorMark{Cui G Z {\it et al.}}
\ReceivedDay{November 17, 2014}
\AcceptedDay{January 22, 2015}
\PublishedOnlineDay{; published online January 22, 2016}
\DOI{10.1007/s11425-000-0000-0} 

\title[Rational maps as Schwarzian primitives]{Rational maps as Schwarzian primitives}{}


\author[1]{CUI GuiZhen}{}
\author[2]{GAO Yan}{Corresponding author.}
\author[3]{RUGH Hans Henrik}{}
\author[4]{TAN Lei}{}

\address[{\rm1}]{Academy of Mathematics and Systems Science,
Chinese Academy of Sciences, Beijing {\rm 100190}, China;}
\address[{\rm2}]{Department of Mathematics, Sichuan University, Chengdu {\rm 610065}, China;}
\address[{\rm3}]{B\^{a}timent 425, Facult\'{e} des Sciences d'Orsay, Universit\'{e} Paris-Sud, Paris {\rm91405}, France;}
\address[{\rm4}]{Facult\'{e} des sciences, LAREMA, Universit\'{e} d'Angers, Angers {\rm49045}, France.}
\Emails{ gzcui@math.ac.cn,
gyan@scu.edu.cn, Hans-Henrik.Rugh@math.u-psud.fr, tanlei@math.univ-angers.fr}\maketitle


 {\begin{center}
\parbox{14.5cm}{\begin{abstract}
We examine when a meromorphic
quadratic differential $\phi$ with prescribed poles is the Schwarzian derivative of a rational
map. We give a necessary and sufficient condition: In the Laurent series of $\phi$ around each pole $c$, the most singular term  should take the form $(1-d^2)/(2(z-c)^2)$, where $d$ is an integer, and then a certain determinant in the next $d$ coefficients should vanish. This condition can be optimized by neglecting some information on one of the poles (i.e. by only requiring it to be a double pole). The case $d=2$ was treated by   Eremenko  \cite{E}.

We show that a  geometric interpretation of our condition is that
the complex projective structure  induced by $\phi$ outside the poles  has a trivial holonomy group. This statement was suggested to us by W. Thurston in a private communication.

 Our work is  related to the problem to finding a rational map $f$ with a prescribed set of critical points,
 since the critical points of $f$ are precisely the poles of its Schwarzian derivative.

Finally, we study the pole-dependency of these Schwarzian derivatives. We show that, in the cubic case with simple critical points, an analytic dependency fails precisely when the
poles are displaced at the vertices of a regular ideal tetrahedron of the hyperbolic 3-ball.\vspace{-3mm}
\end{abstract}}\end{center}}

 \keywords{Schwarzian derivatives, rational maps, critical points, meromorphic quadratic differentials}

 \MSC{30C15, 30D30}

\renewcommand{\baselinestretch}{1.2}
\begin{center} \renewcommand{\arraystretch}{1.5}
{\begin{tabular}{lp{0.8\textwidth}} \hline \scriptsize
{\bf Citation:}\!\!\!\!&\scriptsize Cui G Z, Gao Y, Rugh H H, Tan L. \makeatletter\@titlehead.
Sci China Math, 2016, 59,
 doi:~\@DOI\makeatother\vspace{1mm}
\\
\hline
\end{tabular}}\end{center}

\baselineskip 11pt\parindent=10.8pt  \wuhao

\section{Introduction}

Let $U\subset\C$ be a domain. Recall that for a non-constant meromorphic function  $f:
U\to \cbar$, its Schwarzian derivative $S_f$ is a meromorphic
function defined by
$$S_f(z)=\left\{\begin{array}{ll} \dfrac{f'''(z)}{f'(z)}-\dfrac 32 \left(\dfrac{f''(z)}{f'(z)}\right)^2&\text{ if } f(z)\ne \infty\vspace{0.2cm} \\
\ds\lim_{w\to z} S_f(w) & \text{ if } f(z)= \infty\end{array}\right. .
$$

It is easily checked that the Schwarzian derivative of a M\"obius transformation is $0$.
There is a composition formula
\begin{equation}\label{composition-formula}
S_{u\circ v}(z)=S_u(v(z))\cdot v'(z)^2 + S_v(z).
\end{equation}
Let $\g$ be a  M\"obius transformation. We have then  $S_{\g\circ f}(z)\equiv S_f(z)$.
 On the other hand,  by considering $S_f(z)dz^2$ as a quadratic differential and by setting $z=\g(w)$, we have $S_{f\circ \g}(w)dw^2=S_f(z)dz^2$.
 I.e. the quadratic differential $S_f(z)dz^2$ is invariant if we pre-compose $f$ by a M\"{o}bius transformation.

 If $f$ is a rational map, then  the quadratic differential $S_f(z) dz^2$ is meromorphic on the Riemann sphere,  whose poles are located precisely at the critical points of $f$.

We give here a necessary and sufficient condition for a meromorphic
quadratic differential on the Riemann sphere to be the Schwarzian derivative of a rational
map.

For $d=1$ set $Y_1=0$. For every positive integer $d\ge 2$, let
 $Y_d(x_1,\cdots, x_{d-1})$ be the polynomial so that
$$ \left|\begin{matrix}
x_1 & 2\cdot 1 \cdot (1-d) & 0 & \cdots & 0\\
x_2 & x_1 & 2\cdot 2\cdot (2-d) & \cdots &0 \\
\vdots  & \vdots & \vdots & & \vdots\\
x_{d-1} & x_{d-2} & x_{d-3} & \cdots & 2(d-1)(-1)\\
x_d & x_{d-1} & x_{d-2} & \cdots & x_1 \end{matrix}\right| = N\cdot \Big(x_d-Y_d(x_1,\cdots, x_{d-1})\Big)$$
for some contant $N\ne 0$.

We will establish:

\begin{theorem}\label{except one}\quad
Let $\phi(z)dz^2$ be a  meromorphic quadratic differential on the Riemann sphere.

(a) If $\phi(z)=S_f(z)$ for some rational map $f$, then all poles of the quadratic differential are of order two,
and around each critical point  $c\in \C$ of $f$ with local degree $d_c\ge 2$, the local Laurent series expansion of $\phi(z)$ has the form
\REFEQN{local form} (z-c)^2\phi(z)= \dfrac{1-d_c^2}2 +a_1(z-c)+a_2(z-c)^2+\cdots, \text{ with }
 a_{d_c}=Y_{d_c}(a_1, a_2,\cdots, a_{d_c-1}).\ENDEQN
The same relation also holds  at $\infty$ (if it is a critical point of $f$) after change of coordinates by $1/z$.

(b) Conversely, assume that  $\phi(z)dz^2$ is a meromorphic quadratic differential, and for each pole $c$, except  possibly one, it takes the local expression \Ref{local form}  for some integer $d_c\ge 2$, furthermore the exceptional pole has order 2. Then $\phi(z)dz^2$ is the Schwarzian derivative of some rational function $f$, having the local expression \Ref{local form}  for every pole. The critical points of $f$ are precisely the poles of  $\phi(z)dz^2$, with  the integer $d_c$ being the local degree of $f$ at $c$.
\end{theorem}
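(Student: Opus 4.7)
The plan is to translate the problem into the language of the companion linear ODE $L\eta:=2\eta''+\phi\eta=0$: whenever $\eta_1,\eta_2$ span its kernel on a simply connected domain, the ratio $f=\eta_1/\eta_2$ satisfies $S_f=\phi$, and conversely every solution of $S_f=\phi$ arises this way, up to post-composition by a M\"{o}bius transformation. Under this correspondence the poles of $\phi$ become the regular-singular points of $L$, and the local degrees of $f$ become the integer differences between Frobenius exponents.

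For part (a), I would fix a critical point $c$ of local degree $d$. Since $S_f$ is invariant under post-composition by M\"{o}bius transformations, I may normalize so that $f(z)=(z-c)^d(1+c_1(z-c)+\cdots)$. A direct computation using $S_f=(\log f')''-\tfrac{1}{2}((\log f')')^2$ then yields the Laurent expansion $(z-c)^2\phi(z)=(1-d^2)/2+a_1(z-c)+\cdots$. To derive the constraint $a_d=Y_d(a_1,\ldots,a_{d-1})$, I would pass to the ODE: its indicial equation at $c$ is $r(r-1)+(1-d^2)/4=0$, with roots $r_\pm=(1\pm d)/2$ differing by the integer $d$. Frobenius theory always produces a solution $\eta_+=(z-c)^{r_+}(1+\cdots)$; a log-free second solution $\eta_-=(z-c)^{r_-}(1+\cdots)$ exists if and only if a specific resonance sum at step $d$ vanishes. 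The recursion generating this sum is exactly the linear system whose coefficient matrix appears in the definition of $Y_d$ --- its subdiagonal entries $2k(k-d)$ are the recurrence denominators, and the vanishing diagonal entry at $k=d$ captures the resonance --- so expanding the displayed determinant along the last row produces precisely $N\,(a_d-Y_d(a_1,\ldots,a_{d-1}))$. Since $f$ is globally single-valued, the log term must be absent, forcing the identity.

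For part (b), I would invert the above and globalize. On $\cbar\setminus P$, where $P$ is the pole set of $\phi$, the ODE $L\eta=0$ carries a projective monodromy representation $\rho:\pi_1(\cbar\setminus P)\to\mathrm{PSL}(2,\C)$. At each pole $c\ne c_0$ where (\ref{local form}) holds, the vanishing resonance sum supplies two log-free Frobenius solutions, so the local monodromy in the basis $(\eta_+,\eta_-)$ is $\mathrm{diag}(-e^{i\pi d_c},-e^{-i\pi d_c})=\pm\mathrm{Id}$, hence trivial in $\mathrm{PSL}(2,\C)$. Using the relation $\prod_{c\in P}\gamma_c=1$ in $\pi_1(\cbar\setminus P)$, triviality of all but one of the local monodromy generators forces triviality at the exceptional pole $c_0$. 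Combined with the hypothesis that $c_0$ has order exactly $2$, projective triviality at $c_0$ forces the exponent difference to be a non-negative integer $d_0$, the logarithmic coefficient to vanish, and the leading Laurent coefficient to equal $(1-d_0^2)/2$, so (\ref{local form}) holds at $c_0$ as well. With $\rho$ globally trivial, any local basis $(\eta_1,\eta_2)$ extends by analytic continuation to a single-valued pair on $\cbar\setminus P$, and $f:=\eta_1/\eta_2$ is a well-defined meromorphic function there. Near each $c\in P$ it is a M\"{o}bius transform of $(z-c)^{d_c}\cdot(\text{unit})$ and hence extends meromorphically across $c$ with local degree $d_c$; thus $f$ is a rational map with critical set precisely $P$.

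The main technical difficulty will be the analysis at the exceptional pole $c_0$: from the \emph{a priori} weak hypothesis --- a double pole, together with the indirect information that $\rho(\gamma_{c_0})$ is trivial via the $\pi_1$ relation --- one must extract the full strength of (\ref{local form}), including an integer exponent difference, absence of a logarithm, and ultimately $d_0\ge 2$ to rule out the degenerate case of coincident exponents incompatible with $\phi$ actually having a pole. A secondary but essential bookkeeping task is the identification between the determinantal definition of $Y_d$ and the Frobenius resonance condition; this is linear algebra but requires careful tracking of signs and of the normalization constant $N$. Coordinate invariance at $\infty$ is handled by the transformation rule of quadratic differentials, already built into the problem's setup.
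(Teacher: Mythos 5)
Your proposal is correct and follows essentially the same route as the paper: the determinant condition \Ref{c2} arises as a resonance (no-logarithm) criterion for a companion linear ODE (the paper uses $f'=z^{d-1}/g^{2}$ and $2(d-1)g'-2zg''=qg$, you use $2\eta''+\phi\eta=0$ with Frobenius theory, an equivalence the paper itself records in its remark on Eremenko's approach), globalization is by triviality of the holonomy/monodromy as in Corollary \ref{domain}, and the exceptional pole is handled by observing that its local monodromy must also be trivial --- via the $\pi_1$ product relation in your version, via restricting the single-valued primitive in the paper's Theorem \ref{simple-pole} --- which, by the classification of Theorem \ref{developing}, forces an integer exponent difference $d_0\ge 2$, no logarithm, and hence \Ref{local form} there. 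One small correction to your closing remarks: the coincident-exponent case $d_0=0$ (leading coefficient $1/2$) is perfectly compatible with $\phi$ having a double pole and is excluded because the logarithm, i.e.\ a parabolic holonomy generator, is then unavoidable (Theorem \ref{developing}(b)); only $d_0=1$ is ruled out by the hypothesis that the exceptional pole has order exactly two.
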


   We remark that if we require in Theorem \ref{except one} (b) that $\phi(z)dz^2$ takes the local expression \Ref{local form} around each pole $c$, without exceptions, then this theorem is equivalent to Theorem \ref{local} below. The key point of the above theorem is that as long as  $\phi(z)dz^2$ takes the local expression \Ref{local form} around each pole except one, and has order $2$ at the exceptional pole, then it automatically takes the local expression \Ref{local form} around the exceptional pole. This point will be explained in Section 4, and we will also see what happens if  the exceptional pole is not a double pole of $\phi(z)dz^2$.

In the case that all poles except one have $d_c=2$, one can translate the conditions \Ref{local form}
into more explicit conditions on the coefficients of $\phi$. We will give several forms of such results in Section 5.
One particular case is (compare with \cite{E}):

\begin{theorem}\label{rational}\quad Fix $d\ge 2$. Let $c_1,\cdots, c_{2d-2}$  be distinct
points of $\C$, and $A_1,\cdots,A_{2d-2}$ be $2d-2$ unknown complex numbers. For $ i=1,\cdots,2d-2 $ and integers $m\ge 0$, set
 $$L_i= 3A_i^2-4\ds \sum_{j\ne i} \dfrac{A_j(c_i-c_j)+1}{(c_i-c_j)^2} \text{ and } E_{m+1}=\ds\sum_{i} ^{2d-2}\Big(mc_i^{m-1}+c_i^mA_i\Big).$$
Then the following $4$ problems have the identical set of solutions (for $A_i's$):
$$-\dfrac32 \sum_{i=1}^{2d-3}
\dfrac{A_i(z-c_i)+1}{(z-c_i)^2}\text { is equal to $S_f$ with $f$ a rational map of degree $d$},$$
$$\left\{\begin{array}{l}
L_i=0,\ i=1,\cdots,2d-2  \\
\ds E_{m+1}=0, \ m=0,1,2 \end{array}\right., \left\{\begin{array}{l}
L_i=0,\ i=1,\cdots,2d-3  \\
\ds E_{m+1}=0, \ m=0,1,2 \end{array}\right.,
\left\{\begin{array}{l}
L_i=0,\ i=1,\cdots,2d-2  \\
\ds E_{m+1}=0, \ m=0,1 \end{array}\right..$$

 In this case $f$ has simple critical points at $c_i$, $i=1,\cdots, 2d-2$ (and a regular point at $\infty$).
\end{theorem}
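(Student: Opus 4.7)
The strategy is to apply Theorem \ref{except one} to the specific quadratic differential $\phi(z) dz^2$ with
$$\phi(z) = -\frac{3}{2}\sum_{i=1}^{2d-2}\frac{A_i(z-c_i) + 1}{(z-c_i)^2}.$$
First, read the local data off this formula. Expanding $(z-c_i)^2 \phi(z)$ near $c_i$ yields a leading term $-3/2 = (1-2^2)/2$, so every $c_i$ is automatically a double pole with local degree $d_{c_i} = 2$, and the next two coefficients are $a_1 = -\frac{3}{2}A_i$ and $a_2 = -\frac{3}{2}\sum_{j \neq i}\frac{A_j(c_i - c_j) + 1}{(c_i - c_j)^2}$. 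A short computation of the $2\times 2$ determinant that defines $Y_2$ gives $Y_2(x) = -x^2/2$, so the relation $a_2 = Y_2(a_1)$ is precisely $L_i = 0$. Expanding instead at $\infty$, a geometric series gives $\phi(z) = -\frac{3}{2}\sum_{m \ge 1} E_m/z^m$, so $\phi(z)dz^2$ is regular at $\infty$ iff $E_1 = E_2 = E_3 = 0$, and has a pole of order at most one there iff $E_1 = E_2 = 0$.

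With these two expansions in hand, the equivalence (I) $\Leftrightarrow$ (II) follows directly from Theorem \ref{except one}. Part (a) forces every pole of $S_f$ to be a double pole satisfying \Ref{local form}, so if $\phi = S_f$ with $f$ rational of degree $d$ then $\infty$ is not a pole (giving $E_1 = E_2 = E_3 = 0$) and each $L_i = 0$; Riemann--Hurwitz then forces $d_{c_i} = 2$ for every $i$ via $\sum (d_c - 1) = 2d-2$ over the $2d-2$ prescribed critical points. Conversely, (II) places $\phi$ squarely in the hypotheses of Theorem \ref{except one}(b) with no exceptional pole. The equivalence (II) $\Leftrightarrow$ (III) is handled by applying Theorem \ref{except one}(b) with $c_{2d-2}$ as the exceptional pole (it is double by the shape of $\phi$): the theorem supplies \Ref{local form} at $c_{2d-2}$, i.e.~$L_{2d-2} = 0$. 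The final assertion that $f$ has simple critical points exactly at $c_1, \ldots, c_{2d-2}$ with a regular point at $\infty$ is then a direct reading of Theorem \ref{except one}(a) once $d_{c_i} = 2$ is known.

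The step I expect to be the main obstacle is (IV) $\Rightarrow$ (II): given $L_i = 0$ for all $i$ and $E_1 = E_2 = 0$, one must deduce $E_3 = 0$. If $E_3 \neq 0$ then $\phi(z)dz^2$ has a simple pole at $\infty$, which is just outside the scope of Theorem \ref{except one}(b) because that result requires the exceptional pole to have order two. My plan is to invoke the extension of Theorem \ref{except one} to exceptional poles of order different from two, announced in the remark immediately following Theorem \ref{except one} and developed in Section 4; that extension should rule out the coexistence of \Ref{local form} at every finite $c_i$ with a simple (rather than double or absent) pole of $\phi(z)dz^2$ at $\infty$, forcing $E_3 = 0$ and returning us to (II). A more computational fallback would be to establish a residue-based algebraic identity expressing $E_3$ as a polynomial combination of the $L_i$ together with $E_1$ and $E_2$ on the $L$-locus; the small cases $d = 2, 3$ make this plausible, but the cleaner route goes through the promised extension of Theorem \ref{except one}(b).
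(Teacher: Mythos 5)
Your proposal is correct and follows essentially the paper's own route: the paper deduces this theorem directly from Theorem \ref{Rational}(a), whose proof uses exactly your ingredients --- Lemma \ref{special} (your computation that $L_i=0$ is condition \Ref{c2} at $c_i$), Lemma \ref{E} (your expansion $\phi(z)=-\tfrac32\sum_{m\ge1}E_m z^{-m}$ at infinity), and the Section 4 analysis of the exceptional pole. In particular, the step you flagged as the main obstacle, (IV)$\Rightarrow$(II), is settled exactly as you anticipated: since $L_i=0$ for all $i$ yields a Schwarzian primitive of $\phi$ on $\C$, Theorem \ref{simple-pole}(i) (the statement that a simple pole never occurs for the Schwarzian of a locally injective map on a punctured disk) excludes a simple pole of $\phi(z)dz^2$ at $\infty$, so $E_1=E_2=0$ forces $E_3=0$ and no residue identity is needed.
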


Notice that Eremenko in \cite{E} added one more equivalent problem to the above list, namely
$$\left\{\begin{array}{l}
L_i=0,\ i=1,\cdots,2d-2  \\
\ds E_2=0 \end{array}\right..$$

Since there are  more equations than unknowns, it is {\it  a priori} not clear that there are solutions
of such systems. Applying known results of L. Goldberg \cite{G} we could see that
the number of solutions is finite and non-zero, and depends on the set $c_1,\cdots, c_{2d-2}$.
Generically it is the Catalan number $u_d$, and otherwise it is at most $u_d$. See Section \ref{count} for a detailed account.

\REFTHM{merom}\quad  Fix $k\geq 1$. Let $c_1,\cdots, c_k$  be distinct
points of $\C$ and let $r_1,\cdots, r_k$ be complex numbers.
Define
 $$ e_i(z)=\prod_{j\neq i} \frac{z-c_j}{c_i-c_j} \ \ \mbox{and}  \ \
 v_i = \sum_{j\neq i}
    \left[
        -\dfrac32 \dfrac{1}{(c_i-c_j)^2} + \dfrac{r_j}{c_i-c_j} \right]
	.$$

Let $G$ be an entire function and let
 \begin{equation}
 \psi(z)=
 \sum_{i=1}^{k}
\left[ -\dfrac32 \dfrac{1}{(z-c_i)^2} + \dfrac{r_i}{z-c_i}
  + (-\frac{1}{2}r_i^2-v_i) e_i(z) \right]
+ G(z) \prod_{i=1}^{k} (z-c_i).
  \label{general merom}
  \end{equation}

Then  $\psi=S_f$ for a function  $f$
which is meromorphic in the entire complex plane and has
precisely $k$ critical points that are simple and
located at $c_1,\cdots,c_{k}$.

Conversely, any such function $f$ must have a Schwarzian
derivative of the form (\ref{general merom}).
\ENDTHM

Finally we are interested in the pole-dependency of the Schwarzian derivatives of rational maps of a given degree. We have obtained a complete answer on the cubic case:

\REFTHM{dependency}\quad
Let $\dfrac{z^3+ a_1z + a_0}{z^2+b_1z+b_0}$ be a rational function with 4 distinct critical points. Denote by $\De$ the cross ratio of these four points. Then the associated Schwarzian derivative depends locally holomorphically on the 4 critical points if and only if $a_1+3 b_0\ne 0$, and if and only if $\Delta\not\in \{ (1\pm i\sqrt{3})/2\}$.
\ENDTHM

\section{Local solution}

Let $U\subset\C$ be a domain. Recall that for a holomorphic map $f:
U\to \cbar$, its Schwarzian derivative $S_f$ can be expressed as $$S_f(z)=\left(\dfrac{f''}{f'}\right)'- \dfrac12 \left(\dfrac{f''}{f'}\right)^2 \ .$$
It is easy to check that $z_0\in U$ is a pole of $S_f(z)$ if and
only if $z_0$ is a critical point of $f(z)$. In that case the pole is
always a double pole. More precisely, if $\deg_{z_0}f=d>1$, then
$$
S_f=\dfrac{1-d^2}{2(z-z_0)^2} +\dfrac{a_1}{z-z_0}+a_2+\cdots
\quad \text{ as } z\to z_0.
$$

We address the inverse problem: Let $U\subset\C$ be a domain. Given
a meromorphic function $\phi(z)$ on $U$, find a holomorphic map $f:
U\to\cbar$ such that $S_f(z)=\phi(z)$.

The following result is well known:

\begin{lemma}[{see e.g. Theorem 1.1 in \cite{L}, pp. 53}]\label{well know}. If the domain $U$ is simply-connected and the meromorphic function
$\phi(z)$ has no poles in $U$,  then the equation $S_f(z)=\phi(z)$
has solutions. Moreover, if  $f(z)$ and $g(z)$ are two solutions,
 there is a M\"{o}bius transformation $\g$ of $\cbar$ such that
$g(z)=\g\circ f(z)$.
\end{lemma}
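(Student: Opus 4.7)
The plan is to linearize the nonlinear Schwarzian equation $S_f=\phi$ by converting it into a second-order linear ODE. The key observation is that if $y_1,y_2$ are any two linearly independent holomorphic solutions of
$$y'' + \tfrac12 \phi(z)\, y = 0,$$
then their ratio $f=y_1/y_2$ satisfies $S_f=\phi$. I would verify this by a direct computation: since the ODE has no first-order term, the Wronskian $W=y_1 y_2'-y_1' y_2$ is constant, so $f'=W/y_2^2$ and hence $f''/f'=-2y_2'/y_2$. Substituting $y_2''=-\tfrac12\phi\, y_2$ into $(f''/f')'-\tfrac12(f''/f')^2$ then collapses to $\phi$.

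For existence, I would invoke the classical theorem that a linear ODE with holomorphic coefficients on a simply-connected domain admits a two-dimensional space of globally holomorphic solutions; concretely, local power-series solutions can be analytically continued along any path without monodromy because $U$ is simply-connected and the coefficient $\phi/2$ is holomorphic on all of $U$. Picking two linearly independent solutions $y_1,y_2$ and setting $f=y_1/y_2$ produces the required meromorphic function on $U$ with $S_f=\phi$.

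For uniqueness up to a Möbius transformation, I would use the composition formula \Ref{composition-formula}. Given two solutions $f,g$ of $S_f=S_g=\phi$, pick any $z_0$ with $f'(z_0)\ne 0$ and locally define $h:=g\circ f^{-1}$ near $f(z_0)$; the composition formula gives $S_h(f)\cdot (f')^2 = S_g-S_f = 0$, so $S_h\equiv 0$ locally, whence $h$ agrees with a Möbius transformation $\gamma$. Thus $g=\gamma\circ f$ near $z_0$; since both sides are meromorphic on all of $U$ and they coincide on a non-empty open set, the identity principle propagates the equality to all of $U$, giving a single global Möbius $\gamma$. (An alternative, equally direct, route is to observe that the two-dimensional solution space of the linear ODE is canonically parametrized up to $GL(2,\C)$, and that the Schwarzian-to-ratio correspondence intertwines this action with the $PGL(2,\C)$-action on meromorphic functions, so changing basis by any $A\in GL(2,\C)$ post-composes the ratio by a Möbius map.)

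The main obstacle is a clean handling of uniqueness in the presence of critical points of $f$: the local inverse $f^{-1}$ does not exist globally, and one must ensure that the locally obtained Möbius $\gamma$ is globally consistent. Using the identity principle for meromorphic functions sidesteps the need to analytically continue $f^{-1}$, so once one fixes any open set on which $f$ is locally invertible (which is dense because the critical set is discrete), the identity $g=\gamma\circ f$ extends automatically by meromorphy. The existence half then reduces to Cauchy's holomorphic theory for linear ODEs, and the novelty in the argument is really the bijection between solutions of the Schwarzian equation and projective bases of the solution space of the associated linear equation.
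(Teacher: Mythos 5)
Your proof is correct and is essentially the classical argument that the paper itself points to rather than proves (the lemma is quoted from Lehto's Theorem 1.1 with no proof given); the same linearization $y''+\tfrac12\phi\,y=0$ underlies the paper's own computations elsewhere, e.g.\ the Sturm--Liouville remark after Corollary 2.4 and the substitution $f'=z^{d-1}/g^2$ in the proof of Theorem 2.2. One small simplification for your uniqueness step: since $\phi$ is holomorphic on $U$, any solution $f$ of $S_f=\phi$ is automatically locally injective (poles of $S_f$ occur exactly at the critical points of $f$), so the caveat about critical points is vacuous and the local Möbius identity $g=\gamma\circ f$ propagates by the identity principle exactly as you say.
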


We consider first the case that $U$ is simply-connected,
$\phi(z)$ has only finitely many poles in $U$ and each pole of
$\phi(z)$ is a double pole. Due to the above statements, we only
need to consider the following local problem:

{\noindent \bf Inverse problem}. Let $d\ge 2$ be an integer and
$\phi(z)$ be a meromorphic function defined in a neighborhood of the
origin, with a double pole at $0$ and leading coefficient  $ \dfrac{1-d^2}{2}$, in other words, with the following local expression \REFEQN{c1}
\phi(z)=\dfrac{1-d^2}{2z^2}+\dfrac{a_1}{z}+a_2+\cdots\quad \text{ as
} z\to 0. \ENDEQN The question is: Under which conditions is there a holomorphic
function $f(z)$ defined in a neighborhood of the origin such that
$S_f(z)=\phi(z)$ ?

\begin{theorem}\label{local}\quad
Let $d\ge 1$ be an integer and $\phi(z)$ be a meromorphic function
defined in a neighborhood of the origin such that
$$ \phi(z)=\dfrac{1-d^2}{2z^2}+\dfrac{a_1}{z}+a_2+\cdots\quad \text{
as } z\to 0.$$ Then the equation $S_f=\phi$ has a meromorphic solution if and
only if \REFEQN{c2} \det\! \lv
a_1 & k_1 & 0 & \cdots & 0\\
a_2 & a_1 & k_2 & \cdots &0 \\
\vdots  & \vdots & \vdots & & \vdots\\
a_{d-1} & a_{d-2} & a_{d-3} & \cdots & k_{d-1}\\
a_d & a_{d-1} & a_{d-2} & \cdots & a_1 \rv = 0 \text{ where
$k_j=2j(j-d)$, $j=1,\cdots,d-1$.}\ENDEQN
\end{theorem}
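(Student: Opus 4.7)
The plan is to linearize the Schwarzian equation and then apply Frobenius theory at the regular singular point $z=0$. Recall the classical fact that on any simply-connected domain, $S_f=\phi$ is equivalent to $f=u/v$ for a pair of linearly independent solutions $u,v$ of the linear ODE $w''+\tfrac12\phi\, w=0$ (one may take $v=1/\sqrt{f'}$ and $u=f/\sqrt{f'}$). So the question reduces to: when does this ODE admit two independent local solutions whose ratio is single-valued and meromorphic near $0$?

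The indicial polynomial of the ODE at $z=0$ is $\rho(\rho-1)+\tfrac{1-d^2}{4}=0$, with roots $\rho_\pm=\tfrac{1\pm d}{2}$ differing by the positive integer $d$; this is the resonant Frobenius case. Standard theory always yields a convergent solution $w_+=z^{\rho_+}(1+O(z))$. A second independent solution is either of pure Frobenius form $w_-=z^{\rho_-}(c_0+c_1z+\cdots)$ with $c_0\ne 0$, or else takes the form $w_-=z^{\rho_-}h(z)+K\log z\cdot w_+$ with $K\ne 0$.

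The key calculation is to substitute the ansatz $w=z^{\rho_-}\sum_{n\ge 0}c_nz^n$ into the ODE. A direct computation produces the recurrence $2n(n-d)\,c_n+a_1c_{n-1}+a_2c_{n-2}+\cdots+a_nc_0=0$ for $n\ge 1$. For $1\le n\le d-1$ the leading coefficient $k_n=2n(n-d)$ is nonzero, so $c_n$ is determined by $c_0,\ldots,c_{n-1}$; at $n=d$ this coefficient vanishes and the relation becomes the compatibility constraint $a_1c_{d-1}+a_2c_{d-2}+\cdots+a_dc_0=0$. Read as a homogeneous linear system in $(c_0,\ldots,c_{d-1})$, the $d$ equations for $n=1,\ldots,d$ have as coefficient matrix exactly the $d\times d$ matrix of \Ref{c2}, so a solution with $c_0\ne 0$ exists precisely when that determinant vanishes.

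The conclusion then splits into two cases. If the determinant vanishes, the Frobenius series for $w_-$ converges by the classical estimates for regular singular points, and any M\"{o}bius combination $f=(Aw_++Bw_-)/(Cw_++Dw_-)$ is meromorphic near $0$ with $S_f=\phi$ --- the fractional powers cancel in the ratio since $\rho_+-\rho_-=d\in\Z$, and one reads off a critical point of local degree $d$ (resp.\ a regular point for $d=1$) at $0$. If the determinant is nonzero, the second independent solution must contain the $\log z$-term above, and any ratio of two independent solutions acquires an additive logarithmic contribution under analytic continuation around $0$, so $f$ cannot be single-valued. I expect the main obstacle to be the linear-algebra bookkeeping in the central step: verifying row-by-row that the Frobenius recurrence reproduces exactly the matrix displayed in \Ref{c2}; once that identification is made, the remainder is a careful application of standard Frobenius theory.
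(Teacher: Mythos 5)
Your argument is correct, but it takes a genuinely different route from the paper's own proof --- in fact it is essentially the Sturm--Liouville approach that the paper attributes to Eremenko in the remark following Corollary \ref{second}: there the equation \Ref{fro} and the ansatz \Ref{half} (your $w_-=z^{(1-d)/2}\sum_k c_kz^k$) are stated to ``lead to the same criteria''. The paper instead normalizes a putative primitive as $f(z)=\tfrac{z^d}{d}(1+O(z))$, writes $f'=z^{d-1}/g^2$ with $g$ holomorphic, $g(0)=1$, and derives the second-order linear equation \Ref{linear-d}, $2(d-1)g'-2zg''=qg$; its power-series recursion \Ref{coefficient} is exactly your Frobenius recurrence $2n(n-d)c_n+a_1c_{n-1}+\cdots+a_nc_0=0$, so both proofs hinge on the same determinant \Ref{c2}, and in both cases a nonzero solution automatically has $c_0\ne0$ because $k_1,\dots,k_{d-1}\ne0$. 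The differences are: (i) convergence --- you invoke classical regular-singular-point estimates, while the paper gives a short self-contained majorant argument in Lemma \ref{Linear general}; (ii) intermediate objects --- the paper works only with the single-valued holomorphic $g$, avoiding the half-integer exponents $z^{(1\pm d)/2}$ that appear in your $w_\pm$ when $d$ is even (harmless, as you note, since only the ratio matters, but it must be handled on simply connected subsets); (iii) the ``only if'' direction --- you use the resonant log-term and monodromy, whereas the paper post-composes an arbitrary meromorphic solution by a M\"obius map (using $S_{\gamma\circ f}=S_f$) to reduce to the normalized form and derives a contradiction from the nonexistence of $g$. Your monodromy argument is sound but stated loosely: the ratio of two solutions does not literally acquire an ``additive logarithmic contribution''; rather, when the log-term is present the monodromy matrix is $\pm$ a nontrivial Jordan block, so analytic continuation replaces $f$ by $\gamma\circ f$ for a parabolic $\gamma\ne\mathrm{id}$, and since a non-constant meromorphic $f$ cannot satisfy $\gamma\circ f=f$, single-valuedness fails --- this is exactly the holonomy viewpoint the paper develops separately in Corollary \ref{second} and Theorem \ref{developing}. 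What each approach buys: yours imports the Frobenius dichotomy and convergence for free and makes the obstruction visible as holonomy from the start; the paper's is elementary and self-contained, and its auxiliary equation with general complex parameter $\delta$ (Lemma \ref{Linear general}, including the choice $\delta=-d$) is reused later to build the multivalued primitive $\widehat f$ and to classify the holonomy at the puncture.
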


\begin{proof} {\bf Step 1. Reduction to a linear differential equation}. We may look for solutions of the form
\REFEQN{f0} f(z)=\dfrac{z^d}d (1+b_1 z+ b_2 z^2 +\cdots)\text{ as } z\to 0.
\ENDEQN

Note that if such a solution $f$ exists the function  $f'(z)/z^{d-1}$ is holomorphic in a
neighborhood of the origin and takes the value 1 at the origin. Thus
there is a local holomorphic map $g(z)$ such that
$$
f'(z)=\dfrac{z^{d-1}}{g^2(z)},\text{ and } g(z)=1+c_1z+ c_2 z^2
+\cdots\text{ as }z\to 0.
$$
This motivates us to set $f(z)=\displaystyle{\int_{0}^z\frac{\zeta^{d-1}}{g^2(\zeta)}\textrm{d}\zeta}$, and express $S_f$ in terms of $g$.
By a direct computation, we have:
$$
\dfrac{f''}{f'}=\dfrac{d-1}{z}-\dfrac{2g'}{g}\text{ and }
\left(\dfrac{f''}{f'}\right)'=\dfrac{1-d}{z^2}-\dfrac{2(g''g-g'^2)}{g^2}.
$$
Thus
$$
S_f=\left(\dfrac{f''}{f'}\right)'- \dfrac12 \left(\dfrac{f''}{f'}\right)^2=\dfrac{1-d^2}{2z^2}+\dfrac{2(d-1)g'}{zg}-\dfrac{2g''}{g}.
$$

On the other hand, set
\REFEQN{q}q(z)=z\left(\phi(z)-\dfrac{1-d^2}{2z^2}\right)=a_1+a_2z+\cdots.\ENDEQN
Then the equation $S_f(z)=\phi(z)$ takes the following form:
\REFEQN{linear-d} 2(d-1)g'-2zg''=qg.\ENDEQN This is a linear
differential equation.

{\bf Step 2. Solving the differential equation.}

For later purpose, we solve the equation (\ref{linear-d}) with $d$ replaced by an arbitrary complex number $\de$.

\REFLEM{Linear general}\quad Let $q(z)=a_1+a_2z+\cdots$ be a convergent power series in a disk $\{|z|< R\}$ and $\de$ an arbitrary complex number. Then the equation
\REFEQN{linear} 2(\de-1)g'-2zg''=qg\ENDEQN
has a formal power series solution in the form
\REFEQN{series} g(z)=1+c_1z+ c_2 z^2
+\cdots\text{ as }z\to 0
\ENDEQN
if and only if: either $\de$ is not a strictly positive integer, or $\de=d\ge 1$ is an integer and the coefficient $a_d$ of  $q(z)$ is related to the previous coefficients by the condition \Ref{c2}.
In both cases the power series of $g$ converges in the disk $\{|z|< R\}$.
\ENDLEM
\begin{proof}
Set $c_0=1$. The
right hand side of \Ref{linear} is equal to
$$
qg=\ds\sum^{\infty}_{n=1}\left(\ds\sum_{j= 0}^{n-1}a_{n-j}c_j\right)z^{n-1},
$$
whereas the left hand side takes the expansion, by setting $k_n=2n(n-\de)$,
$$
2(\de-1)g'-2zg''=\ds\sum^{\infty}_{n=1}[2n(\de-n)]c_nz^{n-1}=
\ds\sum^{\infty}_{n=1}-k_nc_nz^{n-1}.
$$
Comparing the coefficients, we get the recursive formula:
\REFEQN{coefficient}\forall\, n\ge 1,\quad  -k_nc_n=\ds\sum_{j=0}^{n-1}a_{n-j}c_j=
a_n+a_{n-1}c_1+\cdots a_1c_{n-1}. \ENDEQN
One may rewrite it in matrix form:
\REFEQN{all} \forall\, n\ge 1,\quad \lv a_n & a_{n-1} & \cdots & a_1 & k_n & 0 & \cdots \rv \lv  1 \\ c_1 \\ \vdots \\ c_{n-1} \\ c_n\\ c_{n+1}  \\ \vdots \rv=0.\ENDEQN

As $k_n=2n(n-\de )$, we have $k_n=0$ if and only if $\de $ is a strictly positive integer and $n=\de$.

{\noindent \bf Case 1}. Assume that $\de$ is a complex number that is not a strictly positive integer.
Then $k_n\ne 0$ for all $n\ge 1$. So, using $c_0=1\ne 0$, the recursive formula \Ref{coefficient} determines a unique
sequence $c_n$, $n\ge 0$ so that its generating function \Ref{series} is a formal power series solution of of linear equation \Ref{linear}.

{\noindent\bf Case 2}. Assume that $\de$ is a strictly positive integer. Set $\de=d$. In this case $k_d=0$ and $k_n\ne 0$ for any $n\ne d$.
It is then easy to see that the following statements are equivalent:
\begin{enumerate}
\item  the equation \Ref{linear} has a power series solution in the form of \Ref{series}
\item the following linear system has a solution with $c_0=1$,
$$\lv
a_1 & k_1 & 0 & \cdots & 0\\
a_2 & a_1 & k_2 & \cdots &0 \\
\vdots  & \vdots & \vdots & & \vdots\\
a_{d-1} & a_{d-2} & a_{d-3} & \cdots & k_{d-1}\\
a_d & a_{d-1} & a_{d-2} & \cdots & a_1 \rv \lv c_0 \\ c_1 \\ \vdots \\
c_{d-2} \\ c_{d-1} \rv = 0,$$
\item the same system has a non-zero vector solution, and
\item  the determinant of the square matrix vanishes, that is, the condition \Ref{c2}.
\end{enumerate}

{\noindent\bf Convergence of the formal solutions in all cases.} Let $R>0$ be the convergence radius of $q(z)=\sum_{n\ge 1} a_n z^{n-1}$. Let $\de$ be any complex number.  Let $(c_0,c_1,\cdots)$ be a solution of the system \Ref{coefficient}.
 We want to show that   for $|z|<R$ the power series $\ds \sum_{m=0}^\infty c_m z^m$
 converges.

 Fix $0<\rho<R$, and $0<S<+\infty$ such  that
 $$\sum_{m=1}^\infty |a_m| \rho^{m-1}\le  S.$$

 Now fix an integer $m_0$ such that for $m>m_0$, we have $|k_m|=2m|m-\de|\ge  2S\rho$. By \Ref{coefficient},
 for $m>m_0$,
  $$|c_m|\rho^m \le \dfrac {|a_m|\rho^m (|c_{ 0}|\rho^0)+\cdots+ |a_1|\rho^1 ( |c_{m-1}| \rho^{m-1})}{|k_m|}\le \dfrac {S\rho}{|k_m|} \cdot {\ds \max_{l<m} {|c_l|\rho^l}}\le\dfrac 12 \max_{l<m} {|c_l|\rho^l}.$$

 It follows that the sequence $|c_m|\rho^m$ is bounded. Therefore $\sum_{m=0}^\infty c_m z^m$ converges for any $|z|<\rho < R$.
 As $\rho<R$ was arbitrary, we conclude that the series $\ds \sum_{m=0}^\infty c_m z^m$ converges on the disk of convergence of the power series of $q(z)$.\end{proof}

{\noindent\bf Step 3. Conclusion.} Let now consider to the case that $\de=d\ge 1$ is an integer.

 If the coefficients of $q(z)$ do not satisfy \Ref{c2}, then the equation \Ref{linear} does not have a solution $g(z)$ in the form $1+c_1 z+\cdots $. So the equation $S_f=\phi$ does not have a solution $f$ normalized so that
$f(z)=\dfrac{z^d}d(1+O(z))$. Suppose that $f_1$ is a meromorphic solution of $S_f=\phi$ in a neighborhood of the origin. Then there is a  M\"{o}bius transformation $\g$ of $\cbar$ such that $\widetilde{f_1}:=\g\circ f_1$ has the form $\dfrac{z^m}m(1+O(z))$, where $m\geq1$ denotes the local degree of $f_1$ at the origin. Note that the holomorphic function $\widetilde{f_1}$ is also a solution of $S_f=\phi$ by \eqref{composition-formula}. We then get a contradiction.

Assume now that the coefficients of $q(z)$ do satisfy \Ref{c2}. Then the equation \Ref{linear} has a solution $g(z)$ in the form $1+c_1 z+\cdots $, whose convergence disk contains the convergence disk of $q(z)$.
  Thus, the holomorphic function $f(z)=\displaystyle{\int_0^z\frac{\zeta^{d-1}}{g^2(\zeta)}\textrm{d}\zeta}$ is a Schwarzian primitive of $\phi$ in the convergence disk the power series of $q(z)=z\left(\phi(z)-\dfrac{1-d^2}{2z^2}\right)$.
\end{proof}

Notice that in the case that the leading coefficient of $\phi$ is $\dfrac{1-d^2}{2}$, one may choose $\de=-d$ and obtain a  solution of \Ref{linear} in the form $g(z)=1+c_1 z+ \cdots$, independent of the determinant in \Ref{c2} being zero or not. There is therefore $r>0$ a constant so that $g(z)\ne 0$ for every point $z$ with $|z|<r$.
 Fix $0<r_0< r$.  Set $z_0=r_0$ and
 \REFEQN{gg}\widehat{f}(z)=\int_{z_0}^z\dfrac{\zeta^{\de-1}}{g^2(\zeta)}\textrm{d}\zeta= \int_{z_0}^z\dfrac{g^{-2}(\zeta)}{\zeta^{d+1}}\textrm{d}\zeta,\ENDEQN
 where $z$ belongs to a disk neighborhood of $z_0$ disjoint from $0$.
 Denote by $\widehat{b}_d$ the $\zeta^d$ coefficient of the power series expansion of
 $ g^{-2}(\zeta)$. Let $\log z$ be the branch of logarithm in a neighborhood of $z_0$ so that $\log z_0\in \R$.
Then $$\widehat{f}(z)=\dfrac{1}{-d\cdot z^d(1+H(z))} + \widehat{b}_d(\log z -\log z_0)$$ for some holomorphic function $H$ defined in a neighborhood of $z_0$, and $\widehat{f}$ is locally a \sp of $\phi$ around $z_0$.

 This $\widehat{f}$ has an analytic continuation and when making a full turn around $0$ its value
 has a phase shift of $\widehat{b}_d\cdot 2\pi i$, with $H$ extending to a holomorphic function in a neighborhood of $0$ and $H(0)=0$.

We have proved:

 \begin{corollary}[{second criterion}]\label{second}
 Assume that $\phi$ is a meromorphic function in a neighborhood of the origin with the local expression
 $$ \phi(z)=\dfrac{1-d^2}{2z^2}+\dfrac{a_1}{z}+a_2+\cdots\quad \text{
as } z\to 0$$
for some integer $d\ge 1$.
Then the equation $S_f=\phi$ admits  always an eventually multivalued solution $\widehat{f}$ in the form
 $$\widehat{f}(z)=\dfrac1{-d\cdot z^d(1+ H(z))} + \widehat{b}_d \log z\ ,$$
 where $H$ is a holomorphic map (with value in $\C$) around $0$ with $H(0)=0$.
 And $S_f=\phi$ admits a holomorphic solution in a neighborhood of $0$  if and only if $\widehat{b}_d=0$.
\end{corollary}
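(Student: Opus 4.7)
My plan is to apply Lemma \ref{Linear general} with the unconventional choice $\de = -d$. Since $-d \leq -1$ is never a strictly positive integer, Lemma \ref{Linear general} unconditionally produces a convergent power series $g(z) = 1 + c_1 z + c_2 z^2 + \cdots$ solving the linear equation $2(-d-1)g' - 2zg'' = qg$, where $q(z) = z\phi(z) + (d^2-1)/(2z)$. The Schwarzian-derivative computation from the proof of Theorem \ref{local} applies verbatim, since $(1-\de^2)/2 = (1-d^2)/2$ for $\de = \pm d$, so that the integral $\widehat{f}(z) = \int_{z_0}^z \zeta^{-d-1}/g^2(\zeta)\,\d\zeta$ is a Schwarzian primitive of $\phi$ on any simply connected domain avoiding $0$.

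Next I would extract the stated form of $\widehat{f}$ near the origin. Since $g(0) = 1$, the function $g^{-2}$ is holomorphic at $0$; expand $g^{-2}(\zeta) = \sum_{n \geq 0} \widehat{b}_n \zeta^n$ with $\widehat{b}_0 = 1$, so that $g^{-2}(\zeta)/\zeta^{d+1}$ has residue $\widehat{b}_d$ at $0$. Termwise integration along a path avoiding the origin produces a principal part of order $d$, the logarithmic contribution $\widehat{b}_d \log z$, and a function holomorphic at $0$. The principal part together with the holomorphic remainder is meromorphic at $0$ with leading term $-1/(d z^d)$; its reciprocal is therefore holomorphic at $0$, vanishes to order exactly $d$ with leading coefficient $-d$, and hence factors as $-d z^d(1 + H(z))$ for some holomorphic $H$ with $H(0) = 0$. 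This produces the claimed expression for $\widehat{f}$.

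For the biconditional, the direction ``$\widehat{b}_d = 0$ implies a single-valued solution exists'' is immediate: the logarithmic term drops out and $\widehat{f}$ becomes genuinely meromorphic at $0$, hence a holomorphic map $U \to \cbar$ with $S_{\widehat{f}} = \phi$. For the converse, suppose $f$ is a single-valued holomorphic solution on a neighborhood $U$ of $0$; choose $z_0 \in U \smm \{0\}$ and a simply connected open $V \subset U \smm \{0\}$ on which $\phi$ has no poles. On $V$, both $f$ and $\widehat{f}$ are Schwarzian primitives of $\phi$, so Lemma \ref{well know} provides a M\"{o}bius transformation $\g$ with $f = \g \circ \widehat{f}$. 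Analytically continuing both sides along a loop in $U \smm \{0\}$ enclosing $0$ once, the single-valued $f$ returns to itself while $\widehat{f}$ picks up the additive shift $2\pi i \widehat{b}_d$; hence $\g(w + 2\pi i \widehat{b}_d) = \g(w)$ on the open set $\widehat{f}(V)$, and by the identity theorem on all of $\cbar$. Since $\g$ is a bijection of $\cbar$, this forces $\widehat{b}_d = 0$.

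The only mildly delicate step is the algebraic repackaging in the second paragraph: one must check carefully that the coefficient of $\log z$ is indeed the $\zeta^d$ coefficient $\widehat{b}_d$ of $g^{-2}$, and that the remaining meromorphic piece (principal part plus the holomorphic tail and the integration constant) has leading pole exactly $-1/(dz^d)$ so that the factorization $-1/(dz^d(1+H))$ with $H(0)=0$ is legitimate. Everything else is a direct application of lemmas already in place.
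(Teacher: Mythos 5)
Your proposal is correct and follows essentially the same route as the paper: choosing $\de=-d$ in Lemma \ref{Linear general}, integrating $\zeta^{-d-1}g^{-2}(\zeta)$ from a base point $z_0\neq 0$, and reading off the residue term $\widehat{b}_d\log z$ together with the meromorphic part $1/(-dz^d(1+H))$. The only difference is cosmetic: you spell out the ``only if'' direction explicitly via Lemma \ref{well know} and the monodromy shift $w\mapsto w+2\pi i\widehat{b}_d$, which the paper leaves implicit here and formalizes later through the holonomy discussion of Theorem \ref{developing}.
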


 This criterion has the advantage that it tells the form of an eventual obstruction (the log term, see Theorem \ref{developing} below for an application), but has
 the disadvantage that the condition $\widehat{b}_d=0$ can not be explicitly expressed as the determinant in \Ref{c2}.

 \begin{remark}\quad
 It is known by a direct calculation that if $S_f=\phi$, then $v=\dfrac1{\sqrt{f'}}$ satisfies the Sturm-Liouville equation
   \REFEQN{fro}z^2 v'' + \dfrac{z^2\phi(z)}2 v=0 .\ENDEQN
 Looking for solutions in the form
 \REFEQN{half} v(z)=z^{\frac{1-d}2}\sum_{k=0}^\infty v_k z^k \ENDEQN
 will lead to the
 same criteria. This approach was used by Eremenko \cite{E}, with a somewhat different presentation.
 For instance only the case $d=2$ was worked out explicitly there.
\end{remark}
\section{Developing maps and holonomy around a puncture}

In this section, we give a geometric interpretation of the conditions
(\ref{c1}) and (\ref{c2}).

We may consider the Schwarzian equation on any multiply-connected
domain $U\subset\cbar$. Let $\phi(z)$ be a holomorphic function on
$U$. Then $S_f=\phi$ has a solution on any simply-connected sub-domain in
$U$. Start from a fixed disk $D$ in $U$, we have a solution $f_0$ in
$D$ which is unique up to a M\"{o}bius transformation. The function element $(f_0,D)$ admits an analytic continuation along any paths in $U$ starting from a point of $D$. For any path
$p: [0,1]\to U$ with $p(0)=p(1)\in D$, by following an analytic continuation of $(f_0,D)$ along
$p$, we get another solution $f_p$ on $D$ and thus there is a
M\"{o}bius transformation $\g_p$ such that $f_p=\g_p\circ f_0$. Note
that $\g_p$ is determined by the homotopy class of $p$ and the
choice of $f_0$. Thus $\g_p$ is unique up to a M\"obius
transformation conjugacy.

Denote by ${\mathcal M}$ the group of M\"obius transformations of
$\cbar$. Then we have defined a group homomorphism from the
fundamental group $\pi_1(U)$ to a sub-group $G_{\phi}$ in ${\mathcal M}$ up to a M\"obius transformation conjugacy. The group $G_{\phi}$
is called the {\bf holonomy group}\/ of the
complex projective structure on $U$ induced by $\phi$. The equation $S_f=\phi$
has a global solution if and only if $G_{\phi}$ is trivial (i.e. it contains only the identity).

When $U$ is an annulus, for example a punctured disk,  the fundamental group of $U$ is
isomorphic to $\Z$. Therefore, $G_{\phi}$ is generated by a single
transformation $\g_{\phi}\in{\mathcal M}$. Either $\g_{\phi}$ is
hyperbolic or elliptic and its conjugate class is determined by its
eigenvalue at fixed points, or $\g_{\phi}$ is conjugated with the
parabolic transformation $z\to z+1$, or $\g_{\phi}$ is the identity.

\begin{theorem}\label{developing}\quad Assume that $\phi$ is a meromorphic function in a neighborhood of the origin  with the local expression
$$ \phi(z)=\dfrac{1-\delta^2}{2z^2}+\dfrac{a_1}{z}+a_2+\cdots\quad \text{
as } z\to 0,\quad \delta\in\C.$$
Then, for $U$ a punctured neighborhood of $0$,
\begin{description}
\item [(a)] if $\delta$ is not an integer, the generator $\g_\phi$ is a non-identity  elliptic transformation;
\item [(b)] if $\delta=0$, the generator $\g_\phi$ is a parabolic transformation;
\item [(c)] if $|\delta|$ is an integer greater than or equal to $1$, the generator $\g_{\phi}$ is a parabolic transformation or the identity, and furthermore,
  the generator is equal to the identity if and only if $\phi(z)$ satisfies additionally the condition (\ref{c2}).
  \end{description}
\end{theorem}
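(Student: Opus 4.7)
The plan is to exhibit in each case an explicit (possibly multivalued) Schwarzian primitive $\widehat{f}$ of $\phi$ in a punctured neighborhood of $0$, and then read off the generator $\gamma_\phi$ from the action of analytic continuation once around the puncture. Since two local primitives differ by post-composition with a Möbius transformation, the conjugacy class of $\gamma_\phi$ does not depend on our choice.

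For (a), $\delta$ is not a strictly positive integer, so Lemma \ref{Linear general} furnishes a holomorphic $g(z) = 1 + c_1 z + \cdots$ solving (\ref{linear}). I would set $\widehat{f}(z) = \int_{z_0}^z \zeta^{\delta-1}/g(\zeta)^2\, d\zeta$ and integrate term by term; since $\delta$ is not an integer we have $\delta + k \neq 0$ for every $k \geq 0$, so $\widehat{f}(z) = F(z) - F(z_0)$ with $F(z) = z^\delta\varphi(z)$ and $\varphi$ holomorphic and non-vanishing at $0$. One full turn around $0$ sends $z^\delta$ to $e^{2\pi i\delta}z^\delta$, hence $\widehat{f} \mapsto e^{2\pi i\delta}\widehat{f} + (e^{2\pi i\delta}-1)F(z_0)$, a Möbius transformation with multiplier $e^{2\pi i\delta} \neq 1$ at its finite fixed point. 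Case (b) is the same recipe applied to $\delta = 0$: term-by-term integration now produces a residue, giving $\widehat{f}(z) = \log z + h(z)$ with $h$ single-valued, and one reads off $\gamma_\phi\colon w \mapsto w + 2\pi i$, which is manifestly parabolic.

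The main obstacle is case (c), because when $\delta = d \geq 1$ is a positive integer the direct power-series ansatz of Lemma \ref{Linear general} may fail unless (\ref{c2}) holds. The key idea, already packaged in Corollary \ref{second}, is that $\phi$ depends on $\delta$ only through $\delta^2$: replacing $\delta$ by $-d$ puts us at a value that is not a strictly positive integer, so Lemma \ref{Linear general} always produces a holomorphic $g$. Corollary \ref{second} then delivers the explicit primitive $\widehat{f}(z) = -1/(d\,z^d(1+H(z))) + \widehat{b}_d \log z$ with $H$ holomorphic and $H(0) = 0$. The first summand is single-valued on the punctured disk, so analytic continuation yields $\widehat{f} \mapsto \widehat{f} + 2\pi i\,\widehat{b}_d$; thus $\gamma_\phi$ is the identity when $\widehat{b}_d = 0$ and a nontrivial parabolic translation otherwise. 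Corollary \ref{second} (equivalently Theorem \ref{local}) identifies the vanishing of $\widehat{b}_d$ with condition (\ref{c2}), completing (c).
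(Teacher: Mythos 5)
Your proposal is correct and follows essentially the same route as the paper: cases (a) and (b) via Lemma \ref{Linear general} and the explicit primitives $\frac{z^\delta}{\delta}(1+h(z))$ and $\log z + v(z)$, and case (c) via the substitution $\delta\mapsto -d$ packaged in Corollary \ref{second}, reading off $\g_\phi$ as $w\mapsto w+2\pi i\,\widehat b_d$ and invoking Theorem \ref{local} to identify $\widehat b_d=0$ with condition (\ref{c2}). The only cosmetic difference is that you keep the additive constant and locate the fixed point and multiplier of the resulting affine map, where the paper first kills the constant by post-composing with a translation.
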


\begin{proof} For simplicity we assume that
  $\phi(z)$ is a holomorphic map on the punctured disk $ \D^*$ with values in $\C$ (in particular $\phi(z)$ has no poles on $\D^*$). In this case the power series $q(z)=z(\phi(z)-\dfrac{1-\delta^2}{2z^2})=a_1+a_2z+\cdots$  converges in $\D$.

For a non-integer complex number $\delta$, we define $z^\de=e^{\de \log z}$ for some determination of $\log z$.

{\bf I.} Assume that either $\delta$ is not an integer or $\delta=0$. We see from Lemma \ref{Linear general} that equation (\ref{linear}) always has a holomorphic solution $g(z)=1+c_1z+\cdots$ in $\D$. Therefore, we get a Schwarzian primitive $\widetilde{f}$ of $\phi$ in a small disk $D$ close but disjoint from the origin such that
$$\tilde f(z)=\left\{\begin{array}{ll}\ds\int_{z_0}^z\dfrac{\zeta^{\de-1}}{g^2(\zeta)}d\zeta=\dfrac{z^\de}{\de}(1+h(z))+C_0 & \text{ if  $\de$ is not an integer}\vspace{0.2cm}\\
\ds\int_{z_0}^z\frac{1}{\zeta g^2(\zeta)}d\zeta=\log z+v(z)+C_1 & \text{ if  $\de=0$}\end{array}\right.,
$$
 where $h,v$ are holomorphic maps near $0$ with $h(0)=0=v(0)$ and $C_0,C_1$ are constant.
 Post composing $\tilde f$ by a translation to get rid of the constants, we get a \sp of the form
 $$ f(z)=\left\{\begin{array}{ll}\ds\int_{z_0}^z\dfrac{\zeta^{\de-1}}{g^2(\zeta)}d\zeta=\dfrac{z^\de}{\de}(1+h(z))& \text{ if  $\de$ is not an integer}\vspace{0.2cm}\\
\ds\int_{z_0}^z\frac{1}{\zeta g^2(\zeta)}d\zeta=\log z+v(z)& \text{ if  $\de=0$}\end{array}\right.,
$$
 where $h,v$ are holomorphic maps near $0$ with $h(0)=0=v(0)$, and $z^\de=e^{\de \log z}$ for some fixed determination of $\log z$.

An analytic continuation of $f(z)$ along the curve  $[0,1]\ni \alpha\to e^{2\pi i \al}z_0$  for some $z_0\in D$ gives another Schwarzian primitive $f_1$ of $\phi$ on $D$ such that
\[f_1(z)=\left\{\begin{array}{llll}\ds e^{2\pi i\de}\dfrac{z^\de}{\de}(1+h(z))= \g_\phi\circ f(z), & \text{  $\g_\phi(w)=e^{2\pi i\de }w$} & \text{ if  $\de$ is not an integer}\vspace{0.2cm}  \\
 \log z+2\pi i+v(z)=\g_\phi\circ f(z), & \text{ $\g_\phi(w) =w+2\pi i$} & \text{ if $\de =0$} \end{array}\right..\]

{\bf II.} Assume now that $d:=|\delta|\geq1$ is an integer.  For $D$ a small disk close but disjoint to $0$, the holomorphic map
 $$\widehat{f}(z)= \dfrac{z}{-d\cdot (1+ H(z))} + \widehat{b}_d\log z:D\to \widehat{\C}$$
 given by Corollary \ref{second} is a solution of the equation $S_f=\phi$ on $D$. By following an analytic continuation along the same curve as above we get another solution on $D$
 \[\widehat{f}_1(z)= \dfrac{z}{-d\cdot (1+ H(z))} + \widehat{b}_d\log z+\widehat{b_d}\cdot 2\pi i=\g_\phi\circ \widehat{f}(z),\ \g_\phi(w)=w+2\pi i \widehat{b}_d.\] The map $\g_\phi$ is a M\"{o}bius map having a parabolic fixed point at $\infty$.  It is the identity map if and only if  $\widehat{b}_d=0$. \end{proof}

Applying analytic extension, we have:

\begin{corollary}\label{domain}\quad Let $U\subset\C$ be a simply-connected domain
and $\phi(z)$ be a meromorphic function on $U$ with only finitely
many poles in $U$. Suppose that at each pole $z_0\in U$, the map $\phi$ takes a local expansion
of the form
$$
\phi(z)=\dfrac{1-d^2}{2(z-z_0)^2}+\dfrac{a_1}{z-z_0}+
a_2+\cdots,\text{ as }z\to z_0,\quad d\in \N, d\ge 2, a_i\in \C.
$$
Then the holonomy group of $\phi$  is generated by finitely many parabolic M\"{o}bius transformations.  Furthermore, there is a holomorphic map
$f:\ U\to\cbar$ such that $S_f(z)=\phi(z)$ if and only if $\phi$ satisfies the condition \Ref{c2} at each pole, and if and only if the holonomy group is reduced to the identity. In this case $f$ is unique
up to post composition by M\"{o}bius transformations. \end{corollary}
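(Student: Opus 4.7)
Set $V=U\smm\{z_1,\ldots,z_n\}$, where $z_1,\ldots,z_n$ are the poles of $\phi$ in $U$. Since $U$ is simply connected, $\pi_1(V)$ is free of rank $n$, generated by small loops $\g_1,\ldots,\g_n$ around the punctures. On $V$ the function $\phi$ is holomorphic, so Lemma \ref{well know} supplies a local \sp on any simply-connected subdomain of $V$, and analytic continuation along loops in $V$ produces, exactly as in the discussion preceding Theorem \ref{developing}, a holonomy homomorphism $\rho\colon \pi_1(V)\to {\mathcal M}$, well defined up to conjugacy in ${\mathcal M}$, whose image is by definition $G_\phi$.

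The next step is to identify each generator $\rho(\g_j)$. Around $z_j$ the hypothesis forces $\phi$ to have leading coefficient $(1-d^2)/2$ with $d\in\N$, $d\ge 2$, so Theorem \ref{developing}(c) applies on a punctured neighborhood of $z_j$: the transformation $\rho(\g_j)$ is either parabolic or the identity, and it is the identity precisely when the coefficients of $\phi$ at $z_j$ satisfy condition \Ref{c2}. Since $G_\phi$ is generated by $\rho(\g_1),\ldots,\rho(\g_n)$, this proves the first assertion, and also shows that $G_\phi=\{\id\}$ if and only if \Ref{c2} holds at every pole.

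It remains to convert triviality of $G_\phi$ into the existence of a global meromorphic \sp. If such an $f\colon U\to\cbar$ exists, its restriction to $V$ is a single-valued global solution, forcing $G_\phi=\{\id\}$. Conversely, if $G_\phi=\{\id\}$, the multi-valued local \sp on $V$ descends to a single-valued holomorphic map $f\colon V\to \cbar$. To extend $f$ through each $z_j$, I invoke Theorem \ref{local}: condition \Ref{c2} at $z_j$ produces a local meromorphic solution $f_j$ of $S_{f_j}=\phi$ in a disk $D_j\ni z_j$. On the simply-connected set $D_j\cap V$, Lemma \ref{well know} implies $f=\g_j\iter f_j$ for some $\g_j\in{\mathcal M}$, and hence $f$ extends across $z_j$ as the meromorphic map $\g_j\iter f_j$. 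Running over all $j$ yields the desired $f\colon U\to\cbar$. Uniqueness up to post-composition by elements of ${\mathcal M}$ follows from Lemma \ref{well know} applied on any fixed simply-connected patch in $V$, since analytic continuation is rigid.

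The only delicate point I foresee is the final meromorphic extension: one must avoid re-deriving Laurent expansions at the poles. The argument above handles this by matching the analytically-continued global solution on $V$ with the pre-packaged local meromorphic primitives supplied by Theorem \ref{local}, using the M\"obius rigidity from Lemma \ref{well know} on the overlap $D_j\cap V$. Everything else is bookkeeping about $\pi_1(V)$ and a direct appeal to Theorem \ref{developing}.
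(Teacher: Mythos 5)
Your proposal is essentially the proof the paper intends: the corollary appears there with no written argument beyond ``applying analytic extension,'' and the intended route is exactly yours --- the holonomy representation of $\pi_1(U\smm\{z_1,\dots,z_n\})$ from Section 3, Theorem \ref{developing}(c) to identify each peripheral generator as parabolic or the identity (identity precisely when \Ref{c2} holds at that pole), and Theorem \ref{local} plus Lemma \ref{well know} to pass between triviality of the holonomy and a global single-valued primitive. The logical structure and all the equivalences are correct.

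One local slip: $D_j\cap V=D_j\smm\{z_j\}$ is a punctured disk, not simply connected, so you cannot invoke Lemma \ref{well know} on it directly as stated. The step is easily repaired: apply Lemma \ref{well know} on a simply-connected subdomain of $D_j\smm\{z_j\}$ to get $f=\g_j\iter f_j$ there, and then note that both $f$ and $\g_j\iter f_j$ are single-valued meromorphic on the connected set $D_j\smm\{z_j\}$, so the identity propagates to all of it (equivalently, the M\"obius factor comparing two single-valued solutions is locally constant, hence constant on a connected domain). With that wording fixed, the extension of $f$ across each pole, and hence the whole argument, goes through as you wrote it.
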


\section{Schwarzian derivative near singular points}

Following Corollary \ref{domain}, we know that if $\phi(z)$ is a meromorphic function on $\C$ with only finitely
many poles, and  takes a local expansion \Ref{local form} around each pole, then there exists a holomorphic map $f:\C\to \widehat{\C}$ fulfilling $S_f=\phi$.
Thus, to prove Theorem \ref{except one}, we need to study the behavior of  $S_f$ near $\infty$.

\begin{theorem}\label{simple-pole}\quad Set $\D^*=\{z:\ 0<|z|<1\}$. Let $f: \D^*\to
\cbar$ be a locally injective holomorphic map and $S_f(z)$ be its
Schwarzian derivative. Then we have the following: \\
(i) The origin is never a simple pole of $S_f(z)$. \\
(ii) If the origin is a double pole of $S_f(z)$,  then $f$ extends to a holomorphic function on $\D$ with a critical point at $0$. In this case $S_f$ has a double pole at $0$ with leading coefficient $\dfrac{1-d^2}2$ for some integer $d\ge 2$, and $S_f$ satisfies the condition \Ref{c2}.
\end{theorem}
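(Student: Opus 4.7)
My plan is to reduce both parts to the holonomy analysis of Theorem \ref{developing} (or equivalently Corollary \ref{domain}), exploiting the observation that single-valuedness of $f$ on $\D^*$ forces the holonomy of the projective structure associated to $\phi:=S_f$ around $0$ to be trivial. Indeed, if $D\subset\D^*$ is a small disk and $f_0:=f|_D$, then analytic continuation of $f_0$ along a loop encircling $0$ returns $f_0$ itself (since $f$ is already globally defined on $\D^*$), so the M\"obius transformation $\g_\phi$ fixes every point in the image of $f|_D$, which contains an open subset of $\cbar$, and therefore $\g_\phi=\id$.

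For (i), I would argue by contradiction: assume $\phi$ has a simple pole at $0$ with residue $a_1\ne 0$. Since the $1/z^2$ coefficient of $\phi$ vanishes, I can write $\phi(z)=\dfrac{1-\delta^2}{2z^2}+\dfrac{a_1}{z}+\cdots$ with $\delta=1$, placing us in case (c) of Theorem \ref{developing} with $d=1$. The condition \Ref{c2} for $d=1$ is the $1\times 1$ equation $a_1=0$, which fails by hypothesis; hence $\g_\phi$ is a non-identity parabolic transformation, contradicting triviality of the holonomy.

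For (ii), I would write the leading coefficient of $\phi$ at $0$ as $A=(1-\delta^2)/2$, so $\delta^2=1-2A\ne 1$. If $\delta^2\notin\{d^2:d\in\Z_{\geq 0}\}$, Theorem \ref{developing}(a) produces a non-identity elliptic generator; if $\delta^2=0$, (b) produces a parabolic one; both contradict triviality of the holonomy. Hence $\delta^2=d^2$ for some integer $d\ge 2$ (the value $d=1$ is excluded because $A\ne 0$), and by case (c) the condition \Ref{c2} must hold, yielding both the claimed leading coefficient $(1-d^2)/2$ and the determinantal relation.

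To produce the holomorphic extension, I would apply Corollary \ref{domain} to the simply-connected domain $\D$ (whose only pole of $\phi$ is at $0$, with the expansion and determinantal condition just established), obtaining a holomorphic $F\colon\D\to\cbar$ with $S_F=\phi$. Lemma \ref{well know} then supplies a M\"obius transformation $\g$ with $F|_D=\g\circ f|_D$ on a small disk $D\subset\D^*$, and the identity principle propagates $F=\g\circ f$ to all of $\D^*$. Consequently $f=\g^{-1}\circ F$ extends holomorphically to $\D$, inheriting from $F$ a critical point of local degree $d$ at $0$. The conceptual step most deserving of care is the initial one --- upgrading single-valuedness of $f$ to triviality of $\g_\phi$ --- since everything afterward is a direct extraction of statements from Sections 2--3.
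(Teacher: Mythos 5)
Your proof is correct and follows essentially the same route as the paper: the case analysis of Theorem \ref{developing} (elliptic/parabolic generator unless $\delta$ is an integer $\ge 2$ and condition \Ref{c2} holds, with $d=1$ forcing $a_1=0$), followed by the local solvability results of Section 2 (Theorem \ref{local}/Corollary \ref{domain} plus uniqueness up to M\"obius) to extend $f$ through the puncture. The only difference is presentational: you spell out explicitly that single-valuedness of $f$ on $\D^*$ forces $\g_\phi=\id$, a step the paper leaves implicit when it asserts that $\phi$ ``can not have a Schwarzian primitive'' in the excluded cases.
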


\begin{proof}
Set $\phi(z)=S_f(z)$. We just consider the case that the origin is either a double pole, a simple pole or a regular point of $\phi$. There are therefore complex numbers $\de$, $a_n$ ($n\ge 1$) such that
$$\phi(z)=\dfrac{1-\de^2}{2z^2} + \dfrac {a_1}z+a_2+\cdots .$$
 We may choose $\de$ so that $\Re\de\ge 0$.

 If either $\de$ is not an integer or $\delta=0$, we know from  Theorem \ref{developing} (a), (b) that $\phi$ can not have a \sp in a punctured neighborhood of $0$.

If $\de=d\geq 1$ is an integer and the condition \Ref{c2} is  not satisfied,  by Corollary \ref{second} and
  Theorem \ref{developing} (c), the meromorphic function $\phi$ can not have a \sp in a punctured neighborhood of $0$.

If $\de=d=1$, note that  the condition \Ref{c2} is equivalent to $a_1= 0$, so that $\phi$ has no pole at $0$.

Therefore, in order to have $\phi=S_f$ in a punctured neighborhood of $0$ we must have
either $\de=1$ and $\phi$ has no pole at $0$, or $\de=d$ is an integer greater than or equal to 2 and
$\phi$ satisfies the condition \Ref{c2}. In both cases, the map $f$ has an analytic continuation on $\D$ according to the discussion in section 2.
\end{proof}

Note that the above proof gives the nature of Schwarzian primitives for a meromorphic map $\phi$
in a  neighborhood of $0$ with at worse double pole at $0$.


\begin{proof}[\bf Proof of Theorem \ref{except one}]
Point (a) follows directly from Theorem \ref{local}. To prove point (b), without loss of generality, we assume that the exceptional pole of $\phi$ is $\infty$.
By Corollary \ref{domain}, we obtain a Schwarzian primitive $f$ of $\phi$ on $\C$. Since $\infty$ is a double pole of $\phi$, by Theorem \ref{simple-pole} (ii) $f$ extends to a rational map and $S_f=\phi$ takes the local expression (1) around $\infty$ after change of coordinates by $1/z$.
\end{proof}

 \section{Schwarzian derivatives of rational maps, a global study}

In this section, we give a necessary and sufficient condition for a meromorphic
quadratic differential to be the Schwarzian derivative of  a polynomial, or a rational
map, in the case that all except one poles have $d_c=2$.

 Let $c_1,\cdots, c_{k}$ ($k\ge 1$) be distinct
points of $\C$. Set
\begin{equation}\label{phi}
\phi(z):=-\dfrac32 \sum_{i=1}^{k}
\dfrac{A_i(z-c_i)+1}{(z-c_i)^2},
\end{equation}
and
$$L_i:=  3A_i^2-4\ds \sum_{j\ne i} \dfrac{A_j(c_i-c_j)+1}{(c_i-c_j)^2},\ (i=1,\cdots,k).$$
Recall that if a meromorphic function has a double pole at $c$, the number $d_c$ is defined by the expression
\[\frac{1-d_c^2}{2(z-c)^2}+\dfrac{a_1}{z-c}+a_2+\cdots\quad \text{as $z\to c$}.\]

\begin{lemma}\label{special}\quad
For each $c_i$,  the condition \Ref{c2} holds at $c_i$ for $\phi$ of form (\ref{phi}) if and only if $L_i=0$.
\end{lemma}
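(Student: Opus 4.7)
The plan is a direct Laurent-coefficient calculation at $c_i$. Since each summand of (\ref{phi}) with index $j \ne i$ is holomorphic at $c_i$, the principal part of $\phi$ there comes entirely from the $i$-th term. Splitting
\[
\phi(z) = -\dfrac{3}{2}\cdot\dfrac{A_i(z-c_i)+1}{(z-c_i)^2} + \psi_i(z), \qquad \psi_i(z) := -\dfrac{3}{2}\sum_{j \ne i}\dfrac{A_j(z-c_j)+1}{(z-c_j)^2},
\]
one obtains $(z-c_i)^2\phi(z) = -\dfrac{3}{2} + a_1(z-c_i) + a_2(z-c_i)^2 + \cdots$ with
\[
a_1 = -\dfrac{3A_i}{2}, \qquad a_2 = \psi_i(c_i) = -\dfrac{3}{2}\sum_{j \ne i}\dfrac{A_j(c_i-c_j)+1}{(c_i-c_j)^2}.
\]
The leading coefficient $-3/2 = (1-2^2)/2$ pins down $d_{c_i} = 2$, so this is the value of $d$ to use in the determinant condition \Ref{c2} at this pole.

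The rest of the plan is to specialize \Ref{c2} to $d=2$ and verify it reduces to $L_i = 0$. With $k_1 = 2\cdot 1\cdot (1-2) = -2$, the condition is
\[
\det\begin{pmatrix} a_1 & -2 \\ a_2 & a_1 \end{pmatrix} = a_1^2 + 2a_2 = 0.
\]
Substituting the formulas above and clearing the common factor $3/4$ yields
\[
3A_i^2 - 4\sum_{j \ne i}\dfrac{A_j(c_i-c_j)+1}{(c_i-c_j)^2} = 0,
\]
which is precisely $L_i = 0$. There is no genuine obstacle — the lemma is a direct translation of \Ref{c2} into the ansatz (\ref{phi}) — but one must pay close attention to the sign of $k_1$ and the overall factor $-3/2$ so as not to drop a numerical constant.
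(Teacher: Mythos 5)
Your proposal is correct and follows essentially the same route as the paper: expand $\phi$ at $c_i$ to read off $d_{c_i}=2$, $a_1=-\tfrac{3}{2}A_i$, $a_2=-\tfrac{3}{2}\sum_{j\ne i}\frac{A_j(c_i-c_j)+1}{(c_i-c_j)^2}$, and observe that for $d=2$ condition \Ref{c2} is $a_1^2-k_1a_2=0$ with $k_1=-2$, which after clearing the factor $3/4$ is exactly $L_i=0$.
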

\begin{proof} Fix a pole $c_i$.
The map $\phi(z)$ has the local expression
\[\phi(z)=\dfrac{1-2^2}{2(z-c_i)^2}+\dfrac{-\frac{3}{2}A_i}{z-c_i}-\dfrac{3}{2}\sum_{j\not=i}\dfrac{A_j(z-c_j)+1}{(z-c_j)^2}.\]
As a double pole,   we have $d_{c_i}=2$, $a_1=-3A_i/2$ and $a_2=-\dfrac{3}{2}\sum_{j\not=i}\dfrac{A_j(c_i-c_j)+1}{(c_i-c_j)^2}.$
In this case, condition \Ref{c2} holds if and only if $a_1^2-k_1a_2=0$, if and only if $L_i=0$.
\end{proof}

In order to study the behavior at $\infty$ we set $$E_{m+1}= \sum_{i=1}^k (mc_i^{m-1} + c_i^m A_i),\ m\ge 0.$$ Then
\begin{eqnarray}
   \phi(z) & = & -\dfrac32
      \sum_i
	\left(
	   \frac{1}{z^2} \left( 1 - c_i \frac{1}{z} \right)^{-2}
           +
           A_i \frac{1}{z}\left(1-c_i \frac{1}{z} \right)^{-1}
	 \right)
	\\
    & = & -\dfrac32 \left(
      \frac{1}{z}  E_1 + \frac{1}{z^2} E_2 + \frac{1}{z^3} E_{3}+  \frac{1}{z^4} E_{4} +\cdots \right).
\end{eqnarray}
Changing coordinates $w=1/z$,
the quadratic differential becomes:
$$\phi(z)dz^2 = \phi(w^{-1}) w^{-4} dw^2 =: \Phi(w) dw^2$$
with
$$ \Phi(w) = -\dfrac32
  \left( \dfrac{E_{1}}{w^3} + \dfrac{E_{2}}{w^2} + \dfrac{E_{3}}{w^1} \right)
    + O(w^0).$$
Clearly,
\REFLEM{E}\begin{eqnarray} E_1\ne 0 &\Longleftrightarrow& \infty \text{ is a triple pole of } \phi(z)dz^2\\
E_1=0\ne E_2&\Longleftrightarrow& \infty \text{ is a double pole of } \phi(z)dz^2 \text{ with leading coef. } \dfrac{3E_2}{-2}\\
E_1=E_2=0\ne E_3&\Longleftrightarrow& \infty \text{ is a simple pole of } \phi(z)dz^2\\
E_1=E_2=E_3=0 &\Longleftrightarrow& \infty \text{ is a regular point of } \phi(z)dz^2\ .\end{eqnarray}
\ENDLEM

Note that for $\dfrac{W\cdot (z^m+l.o.t.)}{z^{m+2}+l.o.t} dz^2$, with $W\ne 0$, the point $\infty$ is a double pole
  and after change of coordinates, one may check that the leading coefficient of the quadratic differential at $\infty$ is $W$.

\REFTHM{polynomial}\quad
(1) If $\phi(z)=S_P(z)$ for $P$ a polynomial with $P'(z)=\prod_i (z-c_i)$,
then $$A_i=\dfrac23\sum_{j\ne i} \dfrac1{c_i-c_j},\ i=1,\cdots,k \ .$$
They verify:
$L_i=E_1=0$ for $1\le i\le k$, and $-\dfrac32 E_2=\dfrac{1-(k+1)^2}2$.

(2) Conversely, if $L_i=E_1=0$ for $1\le i\le k$, and $-\dfrac32 E_2=\dfrac{1-(k+1)^2}2$, then
$\phi(z)=S_P(z)$ with $P'(z)=\ds\prod_i (z-c_i)$.
\ENDTHM

\begin{proof}
(1) The formula for $A_i$ can be checked using the formula of $S_P$. The fact $L_i=0$ for $1\le i\le k$
is due to Theorem
\ref{local} and Lemma \ref{special}.

Note that each $c_i$ is a simple critical point of $P$ and $\infty$
 is  a critical point of $P$ of local degree $k+1$.
 It follows that $\infty$ is a double pole of $S_P(z)dz^2$ with leading coefficient $\dfrac{1-(k+1)^2}2$.

  It follows in our case that $E_1=0$ and $-\dfrac 32 E_2=\dfrac{1-(k+1)^2}2$.

(2) By Lemma \ref{special} and Theorem \ref{local}, if $L_i=0$ for $1\le i\le k$, then
there is a holomorphic map $f: \C\to\cbar$ such that
$S_f(z)=\phi(z)$. Moreover,   $E_1=0$ and  $-\dfrac 32 E_2=\dfrac{1-(k+1)^2}2$
implies that  $\phi(z)dz^2$ has a double pole at $\infty$. By Theorem \ref{simple-pole} our map $f$ entends to a holomorphic map at the puncture $\infty$, so $f$ is a rational map.
Furthermore, $\infty$ is a critical point of local degree $k+1$. So $f$ is a polynomial.
\end{proof}

The following result is to be compared with results in \cite{E}.

\REFTHM{Rational}\quad (a) Assume $k=2d-2$. If there is a rational map $f$ with $\deg(f)=d$ such that
$S_f(z)=\phi(z)$, then $L_i=E_j=0$ for $1\le i\le 2d-2$ and $j=1,2,3$. Conversely\begin{enumerate} \item  if $L_i=E_j=0$
for $1\le i\le 2d-3$ and $j=1,2,3$ then automatically $L_{2d-2}=0$ and there is a rational map $f$ with $\deg(f)=d$ such that
$S_f(z)=\phi(z)$. \item Or  if $L_i=E_j=0$
for $1\le i\le 2d-2$ and $j=1,2$ then automatically $E_3=0$ and there is a rational map $f$ with $\deg(f)=d$ such that
$S_f(z)=\phi(z)$.\end{enumerate}

(b) If $L_i=0$ for $1\le i\le k$, then there is a
holomorphic map $f: \C\to\cbar$ such that $S_f(z)=\phi(z)$.
Moreover,
\begin{enumerate}\item if
$E_1\ne 0$, then $f$ has an essential singularity at $\infty$.
\item if $E_1=0$ and $E_2\ne 0$, the $f$ is a rational map   with critical set $\{c_1,\cdots, c_{k},\infty\}$.
And $-\dfrac32 E_2=\dfrac{1-(m+1)^2}2$ for some integer $1\le m\le k$. The map $f$ has local degree $m+1$ at $\infty$.
In particular $k+m$ is an even number and is equal to $2\deg(f)-2$.
\item  if $E_1=E_2=0$, then automatically $E_3=0$ and $f$ is a rational map
with $\deg(f)=d=\dfrac{k+2}2$, with critical set $\{c_1,\cdots, c_k=c_{2d-2}\}$.
\end{enumerate}

(c)  If $k=2d-3$, $L_i=0$ for $1\le i\le 2d-3$, $E_1=0$ and $E_2\ne 0$, Then  $ E_2=1$ and  $\phi=S_f$ for a rational map $f$ with simple critical points at $c_i$, $i=1,\cdots, 2d-3$ and a simple critical point at $\infty$. In particular $f$ is of degree $d$.
 \ENDTHM

\begin{proof}
(a) Let $k=2d-2$. If there is a rational map $f$ with $\deg(f)=d$ such that
$S_f(z)=\phi(z)$, then $L_i=0$ for $1\le i\le 2d-2$ by Theorem
\ref{local} and Lemma \ref{special}.

Note that each $c_i$ is a simple critical point of $f$ and thus the
infinity is not a critical point of $f$ (recall that $f$ has $2d-2$ critical points, counted with multiplicity). Thus the infinity is a
regular point of the quadratic differential $\phi(z)dz^2$.   Therefore $E_i=0$ for $i=1,2,3$.

Conversely, for Case 1, one may assume that the exceptional pole $c_{2d-2}$ is located at $0$.
By Lemma \ref{special} and the conditions $L_i=0$, $i=1,\cdots, 2d-3$, the condition \Ref{c2} is
satisfied at $c_i$, $i=1,\cdots, 2d-3$. By Lemma \ref{E}, the conditions $E_j=0$, $j=1,2,3$ tell that
$\infty$ is a regular point of the quadratic differential $\phi(z)dz^2$.
Thus there is a \sp $f$ of $\phi(z)dz^2$ defined on the simply connected domain $\cbar\smm \{0\}$.
Now we may apply Theorem  \ref{simple-pole} to show that $f$ extends meromorphically to $0$.
So $f$ is a rational map of degree $d$.

Case 2 will be covered by the case (b).3 below.

(b) By Lemma \ref{special} and Theorem \ref{local}, if $L_i=0$ for $1\le i\le k$, then
there is a holomorphic map $f: \C\to\cbar$ such that
$S_f(z)=\phi(z)$ and each $c_i$ is a simple critical point of $f$.

1. If $E_1\ne 0$, then $f$ can not be extended to a rational map at $\infty$. So $\infty$ must be an essential singularity.

2. If $E_1=0$ and $E_2\ne 0$. Then $\infty$ is a double pole of $\phi(z) dz^2$. By Theorem  \ref{simple-pole}
there is an integer $d_\infty\ge 2$ and $f$ extends to a rational map with local degree $d_\infty$ at $\infty$.

Note that at most half of the critical points of a rational map can be merged into a single one. So $d_\infty-1\le k$.

3. If  $E_1=E_2=0$, then the infinity is
either a regular point or a simple pole of the quadratic
differential $\phi(z)dz^2$. According to Theorem \ref{simple-pole} (i), the latter case never happens, so  necessarily $E_3=0$. Since $\infty$ is a regular point of $\phi(z)dz^2$, then $f$ extends to a rational map with critical points $c_1,\ldots,c_k$.

(c) is a particular case of (b).2 with $m=1$. It follows that $E_2=1$.
\end{proof}

\begin{example}
Let us choose two critical points $c_1=1$, $c_2=0$. We get
$(A_{c_1}, A_{c_2})=(3,-3)$ or $(-1,1)$. In the first case we get
$$\phi_1(z)=-\dfrac{3}{2z^2(z-1)^2}, \quad
f_1(z)=\dfrac{z^2}{(z-1)^2}.$$
In the second case,
$$\phi_2(z)=-\dfrac{8z^2-8z+3}{2z^2(z-1)^2},\quad
f_2(z)=2z^3-3z^2.$$
\end{example}

\begin{proof}[\bf Proof of Theorem \ref{rational}]
It follows directly from Theorem \ref{Rational} (a). \end{proof}

\begin{proof}[\bf Proof of Theorem \ref{merom}] Note that a meromorphism $\psi$ on $\C$ has a global expression  (\ref{general merom}) if and only if its poles are exactly $c_1,\ldots,c_k$ and it takes the form
\[\psi(z)=\dfrac{1-2^2}{2(z-c_i)^2}+\dfrac{r_i}{z-c_i}-\frac{r_i^2}{2}+\cdots, \text{ as $z\to c_i$}\]
around each $c_i$.  It follows that each $d_{c_i}=2$ and $\psi$ satisfies condition (\ref{c2}) at each pole. Then, by Corollary \ref{domain}, we get $\psi=S_f$ for a holomorphic map   $f:\C\to \widehat{\C}$ which has
precisely $k$ critical points that are simple and
located at $c_1,\cdots,c_{k}$, and vice versa.\end{proof}

\section{Parameter dependence}

Given a set $\CCC$ of $2d-2$ district points in $\cbar$, we know that there are finitely many choices of
quadratic differentials $\phi_i$ with double poles at $\CCC$ that are Schwarzian derivatives of degree-$d$ rational maps.
One may ask the question whether each $\phi_i$ depends holomorphically on the set $\CCC$.

The answer is no. Here is a counter example, using a family studied by \cite{B}. Set $j:=e^{2\pi i/3}$ and let
$$h_{\alpha}(z)=\dfrac{{\alpha}(z^3+2)+3z^2}{3{\alpha}z+2z^3+1},\quad \begin{array}{c|cccc}\text{critical\ points} & 1&j& j^2&{\alpha}^2\\  \hline \text{critical \ values}& 1&j^2&j &\dfrac{{\alpha}^4+2{\alpha}}{2{\alpha}^3+1}\end{array}.$$

When ${\alpha}^6\ne 1$, we have a degree 3 rational map with four simple critical points.

One can check easily that if $M\circ h_{\alpha}=h_\beta$ for some pair ${\alpha},\beta$ and a M\"obius map $M$, then ${\alpha}=\beta$ and $M=id$. Thus the Schwarzian derivative map ${\alpha} \mapsto S_{h_{\alpha} }$ is injective. Now
$$S_{h_{\alpha} }(z)=-\dfrac32\cdot  \dfrac{1 + 4 {\alpha} ^3 - 4 {\alpha}  z + 8 {\alpha} ^4 z - 18 {\alpha} ^2 z^2 + 8 z^3 - 4 {\alpha} ^3 z^3 +
    4 {\alpha}  z^4 + {\alpha} ^4 z^4}{ (z-{\alpha} ^2)^2 (z^3-1)^2}$$
    $$=-\dfrac3{2 (z-1)^2} + \dfrac{-2 + 2 {\alpha}  + {\alpha} ^2}{({\alpha} ^2-1) (z-1)} - \dfrac 3{
 2 (z-{\alpha} ^2)^2} + \dfrac {3 (-2 {\alpha}  + {\alpha} ^4)}{({\alpha} ^6-1) (z-{\alpha} ^2)} + $$
 $$ +\dfrac 9{
 2 (1 + z + z^2)^2}+ \dfrac{-1 - 2 {\alpha}  - 3 {\alpha} ^2 - 4 {\alpha} ^3 - 2 {\alpha} ^4 - 2 z -
  4 {\alpha}  z - 3 {\alpha} ^2 z - 2 {\alpha} ^3 z - {\alpha} ^4 z}{(1 + {\alpha} ^2 + {\alpha} ^4) (1 + z + z^2)}$$

    Thus ${\alpha} \mapsto S_{h_{\alpha} }$ holomorphically embeds $\C\smm \{\pm1,\pm j,\pm j^2\}$ into the space
    of degree-8 rational maps.

    Fixing a critical set $\CCC=\{1,j,j^2, c={\alpha} ^2\}$, there are generically two ${\alpha} $'s and thus two Schwarzians realizing this set, except when ${\alpha} =0$.

The dependence of each branch $S_{h_{\alpha} }$,  and thus of the parameter ${\alpha} $, on the critical set $\CCC=\{1,j,j^2, c\}$ is locally holomorphic
except when $c=0$.

From this example one may guess that generically each branch of the Schwarzian depends holomorphically on the critical set. We have made this idea precise in the following setting.

Fix an integer $\mu \ge 1$, and set $d=\mu+1$. Choose a vector $\aa\in \C^{2\mu}$, and express it with two multicoordinates $\aa=(\aa_p,\aa_q) \in \C^{ \mu }\times \C^{ \mu }$. Consider each $\aa_p$, $\aa_q$ as a column vector of length $\mu$. Let $B_0(z)$ denote the line vector $\lv 1 & z & \cdots  & z ^{ \mu-1}\rv$. Consider a pair of normalized polynomials, the quotient rational map and its derivative
\begin{eqnarray} p_{\aa}(z)\label{p} &=&  B_0(z) \aa_p + 0\cdot z^{ \mu }+ z ^{\mu+1}\\ q_{\aa}(z) &=& B_0(z) \aa _q + z ^{\mu} \\
f_{\aa}(z)&=& \dfrac {p_{\aa}(z)}{q_{\aa}(z)} =\dfrac{ B_0(z) \aa_p+ 0\cdot z^{ \mu }+ z ^{\mu+1}}{B_0(z) \aa _q + z ^{\mu} }\\
 g_{\aa}(z)\label{g} &:= &\dfrac{{\rm d}f_{\aa}}{{\rm d}z}(z)=\dfrac{p_{\aa}'q_{\aa}-q_{\aa}'p_{\aa}}{q_{\aa}^2}\\
 w_\aa(z)&:=&p_{\aa}'q_{\aa}-q_{\aa}'p_{\aa}=\text{Wronskian}(q_\aa,p_\aa)  \end{eqnarray}

 Notice that $w_\aa$ is a monic polynomial of degree $2\mu$. Thus
 $$w_\aa(z)=(z-c_1)\cdots(z-c_{2\mu}).$$

 Consider the sets $$
 \Omega:=\{\aa\in \C^{2\mu} \mid  resultant(p_\aa,q_\aa)\ne 0\}= \{\aa \mid \deg(f_\aa)=\mu+1\}$$
$$ \Omega':=\{ \aa\in \C^{2\mu} \mid resultant(p_\aa,q_\aa)\cdot  discri(w_\aa) \ne 0\}.$$

We have:
 \REFLEM{normalization}\quad
 1) For $\aa\in \Omega$, the point $\infty$ is not a critical point for every $f_\aa$, i.e. all critical points of $f_{\aa}$ are  in the finite plane.
\\
 2) For any rational map $f$ of degree $\mu+1$ such that the point $\infty$ is not a critical point, there is a unique M\"obius transformation $M$ such that $M\circ f=f_\aa$ for some $\aa$.
 Consequently the Schwarzian derivative map $\Omega\ni \aa \mapsto S_{f_\aa}$ is injective.\\
 3) $$ \Omega'=\{ \aa\in \C^{2\mu} \mid  discri(w_\aa) \ne 0\}.$$
 \ENDLEM
 \begin{proof} 1) and 2) follow from an easy algebraic manipulation.
 For 3) it is easy to check that if the polynomial $p_\aa'q_\aa-q_\aa'p_\aa$ has only simple roots then  $p_\aa$ and $ q_\aa$ are co-prime (the converse is not true).\end{proof}

Denote by $Poly_{2\mu}$  the $2\mu$-dimensional space of monic polynomials of degree at most $2\mu$.
Set $$\WWW:   \C^{2\mu}\ni \aa\mapsto w_\aa\in Poly_{2\mu} ,$$
it is called the {\it Wronskian operator}.
$$V:=\{\aa\in \C^{2\mu} \mid \WWW \text{ is locally invertible}\}=\{\aa\in \C^{2\mu} \mid Jac_\aa\WWW\ne 0\}$$
To every vector in $\C^{2\mu}$ we associate a monic polynomial
$$\C^{2\mu}\ni \cc=\lv c_1\\ \vdots \\ c_{2 \mu} \rv\mapsto R_\cc(z)=(z-c_1)\cdots(z-c_{2\mu})\in Poly_{2\mu} .$$
Permutations of the $c_i$'s will give the same polynomial. By the implicit function theorem
$$\cc\mapsto R_\cc$$ is locally invertible if and only if $c_i\ne c_j$ whenever $i\ne j$.

On the other hand, for any $\aa\in \Omega'$, the roots of $w_\aa$ are pairwise distinct. We may choose an ordering to turn the set of roots into a vector $\cc\in \C^{2\mu}$. There are $(2\mu)!$ choices of such an ordering.
\REFLEM{relate}\quad
The multivalued map $\Omega'\ni \aa\mapsto \cc\in \C^{2\mu}$ is locally invertible if and only if the single valued map  $\Omega'\ni \aa\mapsto w_\aa\in Poly_{2\mu}$ is locally invertible, i.e., $\aa\in \Omega'\cap V$.
\ENDLEM

\begin{example}\quad For $\mu =2$, $ \aa=(a_0,a_1,b_0,b_1)$, we have
$$w_\aa(z)=a_1 b_0-a_0 b_1-2 a_0 z+ \left(3 b_0-a_1\right)z^2 +2 b_1 z^3+z^4,\quad Jac_\aa\WWW=4 a_1 + 12 b_0$$
$$  V=\{\aa \mid 4 a_1 + 12 b_0\ne 0\}$$
 $$\Omega'=\{ \aa=(a_0,a_1,b_0,b_1) \mid  discri(w_\aa)\ne 0\}
$$
Let's us choose $a_1=b_1=b_0=0$ and $a_0=1$. In this case  $p_\aa(z)=1+z^3$ and $q_\aa(z)=z^2$. They are clearly coprime. So $\aa\in \Omega$.
We have $w_\aa(z)=z(z^3-2)$. All roots are simple. So $\aa\in \Omega'$.
As $Jac_\aa\WWW=0$,  we have $\aa\notin V$ and  $\aa\to \cc$ is not locally invertible. One checks easily that
$f_\aa$ has  four distinct critical values. This  example is actually equal to $$M(h_0\Big( \dfrac z {2^{1/3}}\Big) )$$
where the map $h_0$  is obtained at the beginning of this section with critical set $\{1,j,j^2,0\}$ and $M$ is a suitable M\"obius transformation.
\end{example}

\section{Counting}\label{count}

For $f$ a rational map, denote by $\CCC(f)$ the set of critical points of $f$.

Given any subset $\CCC$ of $\C$ consisting of $2(d-1)$ distinct points, and for $\AAA=(A_c)_{c\in \CCC}$ a collection of complex numbers, set
$$E_{m+1}(\AAA)= \sum_{c\in \CCC} (mc^{m-1} + c^m A_c),\ m\ge 0.$$

We have proved the following:

\REFLEM{equal}\quad Given any subset $\CCC$ of $\C$ consisting of $2(d-1)$ distinct points, the following sets are mutually bijective:
\begin{enumerate}
\item $\WWW^{-1}(\{\prod_{c\in \CCC}(z-c)\}), \ \Omega\cap \WWW^{-1}(\{\prod_{c\in \CCC}(z-c)\}),\ \Omega'\cap \WWW^{-1}(\{\prod_{c\in \CCC}(z-c)\})$
\item $\{ \text{ rational maps of degree d with critical set $\CCC$}\ \}/_{f\sim \text {\rm M\"obius\,}\circ f}$
\item $\{\aa\in \Omega\mid \CCC(f_\aa)=\CCC\}$
\item the set of quadratic differentials with doubles poles at $\CCC$, leading coefficients $\dfrac{-3}2$ and trivial holonomy group.
\item $\{\AAA=(A_c)_{ c\in \CCC}\mid  3A_c^2=4\ds \sum_{c'\in \CCC,c'\ne c } \left(\dfrac1{(c-c')^2}+ \dfrac{A_{c'}}{c-c'}\right), \forall\,c\in \CCC, \text{ and } E_j(\AAA)=0, j=1,2,3\}$
\item $\{\aa\in \Omega\mid \CCC(f_\aa)=M(\CCC)\}$, where $M$ is a M\"obius map with $M(\CCC)\subset \C$
\item  $\{ \text{ rational maps of degree d with critical set $M(\CCC)$}\ \}/_{f\sim \text {\rm M\"obius\,}\circ f}$.
\end{enumerate}
\ENDLEM

For example the sets 7 and 2 are related by $f\mapsto f\circ M$.

Using a highly non trivial result of Goldberg, we know that for a generic choice of $\CCC$ the cardinality of the above sets is the Catalan number $u_d=\dfrac1d\lv 2(d-1)\\ d-1\rv$. There are more elementary methods to prove that for a generic choice of $\CCC$, a lower bound of
the cardinality is $u_d$. For instance, A. Eremenko and A. Gabrielov \cite{EG2} provided an elementary way to construct $u_d$ different real rational maps with prescribed simple real critical points. Using this result, it is not difficult to prove that  for a generic choice of $\CCC$,  the  sets in Lemma \ref{equal} are
non-empty and have cardinality at least $u_d$.

For further  discussions on this counting problem, we refer to \cite{E1, EG2, EG3, G, S}.

The case $d=3$ with $u_3=2$ can be computed explicitly:
$$\WWW(\{a_0,a_1, b_0, b_1\})=\left\{a_1 b_0-a_0 b_1,-2 a_0,3 b_0-a_1,2 b_1\right\}.$$
When we try to solve $$\left\{a_1 b_0-a_0 b_1,-2 a_0,3 b_0-a_1,2 b_1\right\}=\{w_0,w_1,w_2,w_3\},$$
we get $$b_1=\dfrac{w_3}2,\ a_0=-\dfrac{w_1}2, \ a_1=3b_0-w_2,\ (3b_0-w_2)b_0+\dfrac{w_1w_3}4=w_0.$$
So $$a_1=\frac{1}{2} \left(-w_2\pm \sqrt{w_2^2+12 w_0-3 w_1 w_3}\right),\ b_0=\frac{1}{6}
   \left(w_2\pm \sqrt{w_2^2+12 w_0-3 w_1 w_3}\right). $$
So generically $\WWW^{-1}(  \{w_0,w_1,w_2,w_3\}) $ has two simple solutions, and it has a double solution if and only if $w_2^2+12 w_0-3 w_1 w_3=0$,  if and only if $a_1+3 b_0=0$.

Thus

\REFLEM{computation}\quad The set $\{\aa\mid a_1+3 b_0=0\}$ is the critical and the co-critical set of $\WWW$ and
$\{\ww \mid w_2^2+12 w_0-3 w_1 w_3=0\}$ is the critical value set of $\WWW$. The map
$$\WWW:\left\{\begin{array}{ll}\C^4\smm \{\aa\mid a_1+3 b_0=0\}\to \C^4\smm \{\ww \mid w_2^2+12 w_0-3 w_1 w_3=0\} & \text{ is a double covering}\\
  \{\aa\mid a_1+3 b_0=0\}\to  \{\ww \mid w_2^2+12 w_0-3 w_1 w_3=0\}& \text{ is a homeomorphism}.\end{array}\right.$$
\ENDLEM
We want to give a geometric interpretation of these sets in  terms of the critical points of  the rational map $f_\aa$, or the zeros of the polynomial $w_\aa(z)$.

\section{Geometry in the cubics}

For $w\ne 0$ with $\Im w>0$, or $\Im w=0$ and $\Re w> 0$, we consider $w,0,1,1+w$ as the 4 corners in cyclic order of a (eventually degenerate) parallelogram $P_w$.
\REFLEM{cross-ratio}\quad Given an ordered 4 distinct points $a,b,c,d\in \C$, we define its cross ratio
by $$[a,b,c,d]=\frac{(a-c)(b-d)}{(c-b)(d-a)}\ .$$
Then there is a unique M\"obius transformation $M$ mapping $a,c,b,d$ to the corners $w,0,1,1+w$ of  $P_w$,  in  order. \ENDLEM
\begin{proof}  Choose $w$ so that $w^2=[a,b,c,d]$ and either $\Im w>0$ or $\Im w=0$ and $\Re w\ge 0$.

Let $M$ be the unique M\"obius transformation sending $a,b,c$ to $w,1,0$. Then
$$[M(a), M(b),M(c), 1+w]=[w,1,0,1+w]=\dfrac{(w-0)(1-1-w)}{(0-1)\cdot (1+w-w)}=w^2=[a,b,c,d]\ .$$
It follows that $M(d)=1+w$.
\end{proof}

If $t=[a,b,c,d]$, then any permutation of the four points will give a cross ratio in the set
$$R(t)=\{t, \frac1t, 1-t,\frac1{1-t}, \frac t{t-1},\frac{t-1}t\}\ .$$
And any number in $R(t)$ is realized this way. For example, set $j:=e^{2\pi i/3}$, then $R(-j^2)=\{- j, - j^2\}$.

\begin{definition}\quad
We say that a set $\CCC$ of four distinct points  in $\cbar$ forms a {\bf regular tetrahedron} if there is a M\"obius map sending $\CCC$ to   $\{1,j,j^2,0\}$, where  $j:=e^{2\pi i/3}$.
\end{definition}

The motivation behind this definition is that the four points $1,j,j^2,0$ are located at the vertices of a regular ideal tetrahedron of the hyperbolic 3-ball. Any four distinct points on the Riemann sphere form
the vertices of an ideal tetrahedron (maybe degenerate). The collection of the angles between adjacent faces of the tetrahedron, or the 'shape' of the tetrahedron in W. Thurston's language, is an invariant of the M\"obius action. These angles are all equal precisely when the four points are M\"obius images of $\{1,j,j^2,0\}$.

\REFLEM{regular}\quad A set $\CCC$ of four distinct points  in $\cbar$ forms a {\bf regular tetrahedron} if and only
the cross ratio of an ordering of the set $\CCC$ belongs to $R(-j^2)$.\ENDLEM

\begin{proof} Note that $[1,j,j^2,0]=\dfrac{(1-j^2)j}{(j^2-j)(0-1)}=1+j=-j^2$  and M\"obius maps preserve cross ratios.
\end{proof}

\REFLEM{symmetric group}\quad Given any set $V$ of four distinct points on $\cbar$ there is a unique 4-group  $\MMM_V$ of M\"obius transformations,  isomorphic of $\Z/2\Z \times \Z/2\Z$, such that every non-identity element of $\MMM_V$ acts on $V$ as an order 2
permutation without fixed points. The map $V\mapsto \MMM_V$ is not injective. The group of M\"obius transformations preserving   the set $V$ may
be strictly larger than $\MMM_V$.
\ENDLEM

\begin{proof} Assume $V=\{a,b,c,d\}$. Let $M_{a,b}$ be the unique M\"obius transformation
mapping  $a,b,c$ to $b,a,d$ respectively. As $[b,a,d,c]=[a,b,c,d]=[M(a), M(b), M(c), M(d)]$ we have necessarily $M(d)=c$.

Here is a geometric proof. Choose $w$ so that $w^2=[a,b,c,d]$ and either $\Im w>0$ or $\Im w=0$ and $\Re w\ge 0$.

Let $N$ be the unique M\"obius transformation sending $a,b,c,d$ to $w,1,0,1+w$.
The vertical line from $\dfrac{1+w}2$ to $\infty$
in the upper 3-space is the unique common perpendicular of the two diagonal geodesics $w\leftrightarrow 1$ and $0 \leftrightarrow 1+w$.
A rotation of 180 degree switches $w$ and $1$, and switches $0$ and $1+w$.
There is therefore a unique common perpendicular of the two  geodesics $a\leftrightarrow b$ and $c \leftrightarrow d$. A rotation of 180 degree with respect to this perpendicular is our desired map $M_{a,b}$.

Now the 4-group is simply $\{id, M_{a,b}, M_{a,c}, M_{a,d}\}$.\end{proof}

\begin{lemma}[{Buff}]\label{Xavier}\quad
 Two rational maps $f$ and $g$ having the same set of critical points and the same images for each of them, are in fact equal (except maybe in the bicritical case).\end{lemma}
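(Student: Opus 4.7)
The plan is to study the rational function $F := f - g$ on $\cbar$ and, assuming $F\not\equiv 0$, to derive a contradiction with $|\CCC| \ge 3$ by double-counting the zeros of $F$. The upper bound will come from the degree of $F$ as a rational map; the lower bound will come from the hypothesis that $F$ vanishes (to high order) at each common critical point.

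First, I would pre- and post-compose both $f$ and $g$ by a common M\"obius transformation (these operations preserve all hypotheses of the lemma) so that $\infty\notin\CCC$ and $\infty\notin f(\CCC)=g(\CCC)$. Then $F$ is a meromorphic function on $\cbar$, regular and vanishing at each $c_i$. Writing $f=P_f/Q_f$ and $g=P_g/Q_g$ in lowest terms with $\deg Q_f=\deg f$, $\deg Q_g=\deg g$, one has $F=(P_f Q_g-P_g Q_f)/(Q_f Q_g)$, whence the total number of zeros of $F$ in $\cbar$, counted with multiplicity, is at most $\deg f+\deg g$.

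For the lower bound, at each common critical point $c_i$ the Taylor expansions
\[
f(z)=f(c_i)+a_i(z-c_i)^{\mult_i(f)}+\cdots, \qquad g(z)=g(c_i)+b_i(z-c_i)^{\mult_i(g)}+\cdots
\]
with $a_i,b_i\ne 0$, together with $f(c_i)=g(c_i)$, immediately force $\mathrm{ord}_{c_i}(F)\ge\min(\mult_i(f),\mult_i(g))$. Assuming the local degrees agree, $\mult_i:=\mult_i(f)=\mult_i(g)$, Riemann--Hurwitz applied to both $f$ and $g$ then forces $\deg f=\deg g =: d$ and $\sum_i \mult_i=2d-2+k$, where $k=|\CCC|$. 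Combining the two bounds,
\[
2d\;=\;\deg f+\deg g\;\ge\;\sum_{i=1}^k \mathrm{ord}_{c_i}(F)\;\ge\;\sum_{i=1}^k \mult_i\;=\;2d-2+k,
\]
i.e.\ $k\le 2$. Consequently, as soon as $k\ge 3$ we obtain a contradiction with $F\not\equiv 0$, and hence $f=g$. The excluded regime $k\le 2$ is precisely the bicritical case.

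The main obstacle is justifying the equality of local multiplicities $\mult_i(f)=\mult_i(g)$ at each common critical point, which is where the lower bound above becomes tight. Without this, one has only $\sum_i\min(\mult_i(f),\mult_i(g))\ge 2k$, yielding the weaker conclusion $k\le(\deg f+\deg g)/2$. I would handle this by a short case analysis: if $\mult_i(f)\ne\mult_i(g)$ at some $c_i$, say $\mult_i(f)<\mult_i(g)$, then $F$ vanishes at $c_i$ to order \emph{exactly} $\mult_i(f)$, and one can match this against the two separate Riemann--Hurwitz identities $\sum \mult_i(f)=2\deg f-2+k$ and $\sum \mult_i(g)=2\deg g-2+k$ to exclude such permuted configurations whenever $k\ge 3$, thereby reducing to the matching-multiplicity case above.
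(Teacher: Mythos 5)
Your main counting argument is, in substance, the paper's own proof: the paper considers the quotient $f/g$ and counts preimages of $1$ (getting the same bounds, at least $2d-2+|\CCC|$ such preimages at the common critical points versus degree at most $2d$), while you consider the difference $f-g$ and count zeros; the normalization sending $\CCC$ and $f(\CCC)$ away from $\infty$ is a nice touch that the paper omits, but the mechanism is identical. So in the matching-multiplicity situation (in particular in the only place the paper uses the lemma, namely Lemma \ref{cubic lift}, where all critical points are simple) your argument is fine.

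The genuine gap is your final paragraph. The claim that the two Riemann--Hurwitz identities ``exclude such permuted configurations whenever $k\ge 3$'' does not follow from the inequalities you have: take $k=3$, $\deg f=\deg g=4$, local degrees $(2,3,4)$ for $f$ and $(4,3,2)$ for $g$ at the three common critical points; both Riemann--Hurwitz identities hold, and $\sum_i \min(m_i(f),m_i(g))=7\le 8=\deg f+\deg g$, so the zero count of $F$ yields no contradiction. Ruling out mismatched local degrees therefore needs an input beyond counting, and indeed it cannot be done in the generality in which the lemma is literally stated: without assuming equal degrees the statement is false, e.g. $f(z)=\tfrac{z^3}{3}-z-\tfrac14$ and $g(z)=\tfrac{z^4}{4}+\tfrac{z^3}{3}-\tfrac{z^2}{2}-z$ have the same critical set $\{1,-1,\infty\}$ and the same image at each of these three points, yet $f\ne g$. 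The honest reading is that ``same critical points'' should be taken with multiplicities (and common degree), which is exactly what your main computation assumes; note that the paper's proof makes the same silent assumptions (it speaks of ``the common degree $d$'', and its assertion that each $c\in\CCC$ is a critical point of $f/g$ with multiplicity at least that of $f$ presumes $\mult_c(f)\le \mult_c(g)$). So: keep your main argument, drop the claimed reduction in the last paragraph, and instead state the multiplicity-matching hypothesis explicitly (it is automatic in the application to cubics with simple critical points).
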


\begin{proof}
Let $\CCC$ be the common critical set and $d$ be the common degree of $f$ and $g$.

On the one hand, each point in $\CCC$ is a critical point of $f/g$ with multiplicity $\ge$ to the multiplicity as a critical point of $f$.
The image of such a point is $1$.
Thus, $1$ has at least $2d-2+ |\CCC|$ preimages counting multiplicities.

On the other hand, $f/g$ is a rational map of degree $\le 2d$. Thus,
$2d-2+|\CCC| \le 2d$ and thus, $|\CCC|\le 2$.\end{proof}

For a rational map $f$, denote by $\CCC(f)$, resp. $\VVV(f)$, the set of critical points, resp. critical values, of $f$.
The following was a remark of W. Thurston.

\REFLEM{cubic lift}\quad For any cubic rational map $f$ with four distinct critical points and four distinct critical values, there is
a unique bijection  $M\leftrightarrow N$ between $\MMM_{\CCC(f)}$ and   $\MMM_{\VVV(f)}$
such that $f\circ M=N\circ f$.  \ENDLEM
\begin{proof} Let   $M\in \MMM_{\CCC(f)}$ ,  $N\in \MMM_{\VVV(f)}$ be two maps so that $M$ permutes the critical points in the same way as
$N$ on the corresponding critical values. Then
$f\circ M|_{\CCC(f)}=N\circ f|_{\CCC(f)}$. So the two rational maps $f\circ M$ and $N\circ f$
have the same action on a identical critical set. By Lemma \ref{Xavier} this implies that they are equal.
\end{proof}

The following statement was first suggested to us by W. Thurston in 2011. Actually it was already proved by Lisa Goldberg in [G], Theorem 1.4, as a consequence of her deep counting result. Here we provide an elementary proof using Lemma \ref{computation}.
\begin{theorem}[{Goldberg}]\label{invariance}
A vector $\aa\in \Omega'$  is a critical point of the Wronskian operator $\WWW$
if and only if the set $\CCC(f_\aa)$ of critical points of $f_\aa$ forms a regular tetrahedron.
A polynomial in $Poly_{4}$ with 4 distinct roots is a critical value of the Wronskian operator $\WWW$
if and only if the set of its roots forms a regular tetrahedron.
\end{theorem}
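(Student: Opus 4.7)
My plan is to use Lemma \ref{computation} to reduce the theorem to a single algebraic statement about monic quartic polynomials, then verify that statement by affine normalization followed by explicit expansion. Substituting the expression $w_\aa(z) = z^4 + 2b_1 z^3 + (3b_0 - a_1) z^2 - 2a_0 z + (a_1 b_0 - a_0 b_1)$ derived in Section 7 into $Q(w) := w_2^2 + 12 w_0 - 3 w_1 w_3$ yields, by direct expansion, the clean identity $Q(w_\aa) = (a_1 + 3b_0)^2$. Combined with Lemma \ref{computation}, this shows that the critical set of $\WWW$ in the source is $\{a_1 + 3b_0 = 0\}$, its critical value set in $Poly_4$ is $\{Q = 0\}$, and $\WWW$ restricts to a homeomorphism between them. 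Since the critical points of $f_\aa$ are precisely the roots of $w_\aa$, both halves of Theorem \ref{invariance} reduce to the single claim: for a monic $w \in Poly_4$ with four distinct roots in $\C$, $Q(w) = 0$ if and only if the four roots form a regular tetrahedron in $\cbar$.

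I would next observe that $Q = e_2^2 + 12 e_4 - 3 e_1 e_3$ (with $e_k$ the elementary symmetric functions in the roots) is invariant under translations $z \mapsto z + \beta$ and scales by $\alpha^4$ under dilations $z \mapsto \alpha z$. A short symbolic calculation, using the standard expansion $\tilde e_k = \sum_{j=0}^k \binom{4-j}{k-j}(-\beta)^{k-j} e_j$ under root translation, confirms this; consequently $\{Q = 0\}$ is affine-invariant, as is regularity of a tetrahedron (Lemma \ref{regular}). Applying an affine change of variable, I may assume that two of the roots are $0$ and $1$, so that $w(z) = z(z-1)(z-a)(z-b)$ with $a,b$ the remaining roots.

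In this normalized case, a direct expansion yields $Q(w) = a^2 b^2 - ab(a+b) + (a+b)^2 - 3ab$, while by Lemma \ref{regular} the four-tuple $\{0, 1, a, b\}$ forms a regular tetrahedron if and only if the cross-ratio $\lambda = a(b-1)/[b(a-1)]$ satisfies $\lambda^2 - \lambda + 1 = 0$, equivalently (after clearing denominators) $a^2(b-1)^2 - ab(a-1)(b-1) + b^2(a-1)^2 = 0$. Expanding both polynomials in $(a,b)$ shows they are literally identical, which completes the equivalence. The main obstacle I anticipate is the bookkeeping in the translation-invariance step, where the shifted $e_k$'s have mildly involved expressions although everything ultimately collapses; should this prove unwieldy, an alternative route is to recognize $Q = 12\,I_2$, where $I_2$ is the classical degree-$2$ invariant of the binary quartic $X^4 w(Y/X)$, and invoke the standard fact that $I_2 = 0$ characterizes equianharmonic $4$-tuples in $\cbar$.
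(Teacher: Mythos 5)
Your proposal is correct, and the computations check out: with $w_\aa$ as in Section 7 one indeed has $w_2^2+12w_0-3w_1w_3=(a_1+3b_0)^2$; the normalized quantity for roots $\{0,1,a,b\}$ is $a^2b^2-ab(a+b)+(a+b)^2-3ab$, which coincides term-by-term with the cleared-denominator form of $\lambda^2-\lambda+1=0$ for $\lambda=a(b-1)/(b(a-1))$; and $Q=e_2^2+12e_4-3e_1e_3$ is exactly $12$ times the classical apolar invariant of the quartic, hence translation-invariant and of weight $4$ under dilation, so the affine normalization is legitimate. However, your route is genuinely different from the paper's. The paper keeps the counting point of view: it uses the explicit family $h_\alpha$ with critical set $\{1,j,j^2,\alpha^2\}$, the injectivity of $\alpha\mapsto S_{h_\alpha}$, the M\"obius-invariance of the fiber cardinality of $\WWW$ coming from Lemma \ref{equal}, and the bound $\#\WWW^{-1}\le 2$ from Lemma \ref{computation}; criticality of a fiber is then decided by whether the fiber has one point (regular tetrahedron, via $h_0$) or two points ($h_{\pm\alpha}$, not M\"obius-post-equivalent). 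You instead use only Lemma \ref{computation} plus a direct invariant-theoretic identification: you prove that $\{w_2^2+12w_0-3w_1w_3=0\}$ is exactly the equianharmonic locus, i.e.\ you establish Corollary \ref{algebraic} first and read off Theorem \ref{invariance} from it, whereas the paper deduces Corollary \ref{algebraic} as a consequence of the theorem (there is no circularity, since Lemma \ref{computation} is proved by the explicit resolution in Section 7 independently of both). What each approach buys: the paper's argument avoids all explicit invariant computations and stays aligned with Goldberg's counting result that it is re-proving; yours is more elementary and self-contained, needs neither $h_\alpha$ nor Lemma \ref{equal}, and produces the explicit algebraic criterion (hence Theorem \ref{dependency}) as an immediate byproduct.
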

\begin{proof}
Recall the family defined above:
$$h_{\alpha}(z)=\dfrac{{\alpha}(z^3+2)+3z^2}{3{\alpha}z+2z^3+1}.$$

Set $\CCC=\{1,j,j^2,0\}$.
Let $\CCC'=\{s,t,u,v\}$ be a set of 4 distinct points in $\C$.

If $[s,t,u,v]\in R(-j^2)$, there is a M\"obius map $M$ such that $M(\CCC)=\CCC'$.
By Lemma \ref{equal}.6
$$1=\#\WWW^{-1}(\{\prod_{c\in \CCC}(z-c)\}) = \#\WWW^{-1}(\{\prod_{c\in \CCC'}(z-c)\}).$$
It follows that the polynomial $\prod_{c\in \CCC'}(z-c)$ is a critical value of the operator $\WWW$.

If $[s,t,u,v]\notin R(-j^2)$, then for the unique M\"obius map $N$ sending $s,t,u$ to $1,j,j^2$, we have $N(v)\in \C\smm \{0, 1,j,j^2\}$. Set $\al^2=N(v)$. There are two maps $h_\al$ and $h_{-\al}$
realizing $\{1,j,j^2,\al^2\}$ as critical set, and $h_\al$ and $h_{-\al}$ do not differ by a post composition of a M\"obius map. We have
$$2\le \#\{ \text{cubic rat. maps $f$ with $\CCC(f)=\{1,j,j^2,\al^2\}$}\}/_{f \sim\text{\rm M\"obius} \circ f}$$
$$\overset{\text{Lemma \ref{equal}.2}}=\#\WWW^{-1}(\{\prod_{c\in \{1,j,j^2,\al^2\}}(z-c)\}) \overset{\text{Lemma \ref{equal}.6} }= \#\WWW^{-1}(\{\prod_{c\in \CCC'}(z-c)\})\overset{\text{Lemma \ref{computation}}}\le 2$$
It follows that $\#\WWW^{-1}(\{\prod_{c\in \CCC'}(z-c)\})=2$ and
the polynomial $\prod_{c\in \CCC'}(z-c)$ is not a critical value of the operator $\WWW$.

Similarly, for $\aa\in \Omega'$ so that $\CCC(f_\aa)$ has 4 distinct critical points. Then
$$\WWW^{-1}(\{\prod_{c\in \CCC(f_\aa)}(z-c)\})$$ has 1 or 2 points depending whether $\CCC(f_\aa)$
forms a regular tetrahedron or not. In the former case, $\aa$ is a critical point of  the operator $\WWW$.
Otherwise $\aa$ is a regular point of $\WWW$.\end{proof}

We may express the result in algebraic terms as follows. \REFCOR{algebraic}\quad
Let $z^4+w_3 z^3+w_2 z^2 + w_1 z +w_0$ be a quartic polynomial with 4 distinct roots. Denote by $\De$ their cross ratio. Then $\De\in R(-j^2)$ if and only if $w_2^2+12 w_0-3 w_1 w_3=0$.
Let $(z^3+ a_1z + a_0, z^2+b_1z+b_0)$ be a pair of polynomials whose wronskian has
4 distinct roots, or equivalently whose ratio as a rational function has 4 distinct critical points. Denote by $\De$ the cross ratio of these four points. Then $\De\in R(-j^2)$ if and only if
$a_1+3 b_0=0$.
\ENDCOR

The corollary together with Lemma \ref{computation} gives Theorem \ref{dependency}.

Note that the M\"obius symmetric group of any four distinct points is non-trivial (Lemma \ref{symmetric group}). Thus for any cubic rational map $f$ with four distinct critical values (hence with four distinct critical points), there exist non-trivial M\"obius maps $M$ and $N$ such that  $f\circ M=N\circ f$ (see Lemma \ref{cubic lift}).

In the case $d\ge 4$, a generic set of $2d-2$ distinct points has a trivial M\"obius symmetric group.
We suspect that the critical points of $\WWW$ occur precisely at maps $f_\aa$ such that there are
non-trivial M\"obius maps $M,N$ with $f_\aa\circ M=N\circ f_\aa$.

For a holomorphic map from an open set of $\C^k$ into $\C^k$, its Jacobian is non zero if and only if it is locally injective (see \cite[Thm. 7.1]{F}). One may use this fact to prove that the critical set and and critical value set of $\WWW$ are both M\"obius invariant, in the following sense.

\REFLEM{higher degree}\quad Let $\mu\ge 2$.
Consider two sets of  $2\mu$ distinct points $\CCC$ and $\CCC'$ in $\C$  such that there is a M\"obius map $M$ satisfying $M(\CCC)=\CCC'$.
Then the polynomial with roots $\CCC$ is a critical value of $\WWW$ if and only if  the polynomial with roots $\CCC'$ is a critical value of $\WWW$.

Let $\aa\in \C^{2\mu}$ such that the rational map $f_\aa$ has critical point set $\CCC$. Then there is a unique M\"obius map $N=N_\aa$ and a unique $\aa'$ such that $N\circ f_\aa\circ M^{-1}=f_{\aa'}$.
And $\aa$ is a critical point of $\WWW$ if and only if $\aa'$ is. \ENDLEM

One can use this lemma to normalize three critical points and three critical values, and thus reduce
our study of $\WWW$ to a subvariety of dimension $2\mu-3$, just as what we did in the cubic case.

\section{Problems for future developments}

In sections 6, 7, 8, we studied how the Schwarzian derivative $S_f$ of a rational map $f$ with only simple critical points depends upon the poles of $S_f$ itself. We showed that $S_f$ does not depend holomorphically on its poles if and only if the poles are the zeros of a critical value polynomial of the Wronskian operator (see Section 6). Furthermore, we gave both algebraic and geometric characterizations of the critical points and critical value polynomials of the Wronskian operator in the case of $d=3$.

For a future research, a natural question is to extend this study of the Wronskian operator to higher degrees. More precisely, in the algebraic aspect, we want to find an explicit formula (using the coefficients of $p_\aa$ and $q_\aa$) defining the critical locus of the Wronskian operator; while in the geometric aspect, we wish to  characterize the critical points of the Wronskian operator by the symmetry of the rational maps: we conjecture that the critical points of $\WWW$ occur precisely at rational maps $f$ such that there are non-trivial
M\"obius maps $M,N$ satisfying $f\circ M=N\circ f$.

In early sections, we gave necessary and sufficient conditions for a local meromorphic function $\phi$ to admit a
local meromorphic Schwarzian primitive $f$. This map $f$ may have a critical point of any multiplicity.
We then gave, in section $5$,  a global description of the Schwarzian derivatives of the rational maps with only simple critical points. A natural sequel would be to carry out  the global study to rational maps with multiple critical points. In this more general setting, questions about holomorphic dependencies on poles may  still be addressed.

Another fascinating direction of further research is to study the trajectory structure of Schwarzian derivatives, as
a particular class of quadratic differentials. During autumn 2010, encouraged by W. Thurston, the last two authors of this present article did quite a lot of numerical experiences on the cubic case. Part of their observations have been proved in the sub sequel group discussion led by Thurston. But many things remain to be proved and higher degree cases remain to be exploited.

\Acknowledgements{This note is based  on  a group discussion lead by W. Thurston in autumn 2010, and on suggestions of J.H. Hubbard and R. Schaefke. The motivation can be found in Thurston's post on Mathoverflow \cite{T}. The determinant in the definition of $Y_d$ was suggested by R. Schaefke. We would like to thank L. Bartholdi, X. Buff, A. Epstein and M. Loday for helpful discussions. The first author is supported by the grant no. 11125106 of NSFC.
He thanks the hospitality of the Angers University where the major part of this research took place. The second author is supported by the grant no. 11501383 of NSFC. The last author is supported by project LAMBDA, ANR-13-BS01-0002.}


\end{document}